\newtheorem{theorem}{Theorem}
\newtheorem{corollary}[theorem]{Corollary}
\theoremstyle{definition}
\newtheorem{example}[theorem]{Example}
\newtheorem{conj}[theorem]{Conjecture}
\newtheorem{note}[theorem]{Note}
\normalfont\fontsize{12}{12}\bfseries}{\thesection}{1em}{} 
\newcommand{\tr}{{\mathrm{Tr}}}
\newcommand{\Tr}{{\mathrm{Tr}}}
\newcommand{\gf}{{\mathrm{GF}}}
\newcommand{\RM}{{\mathrm{RM}}}
\newcommand{\cP}{{\mathcal{P}}}
\newcommand{\cB}{{\mathcal{B}}}
\newcommand{\C}{{\mathcal{C}}}
\newcommand{\bD}{{\mathbb{D}}}
\begin{document}

\title{Bent Vectorial Functions, Codes and Designs} 


\author{Cunsheng Ding\footnote{Department of Computer Science and Engineering, The Hong Kong University of Science and Technology, Hong Kong. Email: cding@ust.hk}, Akihiro Munemasa\footnote{Research Center for Pure and Applied Mathematics, Graduate School of Information Sciences, Tohoku University, Sendai 980-8579, Japan. Email: munemasa@math.is.tohoku.ac.jp}, 
Vladimir D. Tonchev\footnote{Department of Mathematical Sciences, Michigan Technological University, Houghton, MI 49931, USA. Email: tonchev@mtu.edu}}



\maketitle

\begin{abstract}

Bent functions, or equivalently, Hadamard difference sets in the
elementary Abelian group 
$(\gf(2^{2m}), +)$, 
have been employed to construct  symmetric and quasi-symmetric 
designs having the symmetric difference property
\cite{Kantor75}, \cite{DS87}, \cite{Kantor83}, \cite{JT91}, \cite{JT92}. 
The main objective of this paper is to use bent vectorial functions 
for a construction of a two-parameter family of binary linear 
codes that do not satisfy the conditions of the Assmus-Mattson
theorem,
but nevertheless hold $2$-designs. 
A new coding-theoretic characterization of bent vectorial functions is presented.  

\end{abstract}

{\bf Keywords:}  bent function, bent vectorial function, linear code, 2-design.

{\bf MSC:} 94B05, 94B15, 05B05.

\section{Introduction, motivations and objectives}\label{sec-Introd}

We start with a brief review of combinatorial $t$-designs (cf. \cite{AK92}, \cite{BJL}, \cite{T88}).
Let $\cP$ be a set of $v \ge 1$ elements, called {\it points},
 and let $\cB$ be a collection of $k$-subsets of $\cP$, called {\it blocks}, where $k$ is
a positive integer, $1 \leq k \leq v$. Let $t$ be a non-negative integer,  
$t \leq k$. The pair
$\bD = (\cP, \cB)$ is called a $t$-$(v, k, \lambda)$ {\em design\index{design}}, or simply {\em $t$-design\index{$t$-design}}, if every $t$-subset of $\cP$ is contained in exactly $\lambda$ blocks of
$\cB$. 
We usually use $b$ to denote the number of blocks in $\cB$.  A $t$-design is called {\em simple\index{simple}} if $\cB$ does not contain any repeated blocks. In this paper, we consider only simple  $t$-designs.  

Two designs are {\it isomorphic} if there is a bijection between their
point sets that maps every block of the first design to a block of the second
design. An {\it automorphism} of a design is any isomorphism of the design
to itself.
The set of all automorphisms  of a design $\bD$ form the (full)  
automorphism group 
of $\bD$.

It is clear that $t$-designs with $k = t$ or $k = v$ always exist. Such $t$-designs are called {\em trivial}. In this paper, we consider only $t$-designs with $v > k > t$.

The incidence matrix of a design $\bD$ is a $(0,1)$-matrix $A=(a_{ij})$ with rows
labeled by the blocks, columns labeled by the points, where
$a_{i,j}=1$ if the $i$th block contains the $j$th point, and $a_{i,j}=0$
otherwise.
If the incidence matrix is viewed over $\gf(q)$, 
its rows span a linear code of length $v$ over $\gf(q)$, which is denoted 
by $\C_q(\bD)$ and is called the code of the design. Note that a $t$-design can be employed to 
construct linear codes in different ways. The supports of  codewords of a 
given Hamming weight $k$ in a code $\C$ may form a $t$-design, which is referred to 
as a  design supported by the code. 

A design is called {\em symmetric\index{symmetric design}} if $v = b$.
A $2$-$(v, k, \lambda)$ design is symmetric if and only if every two blocks
share exactly $\lambda$ points.

A $2$-design is \emph{quasi-symmetric}\index{quasi-symmetric} with intersection numbers $x$ and $y$, ($x < y$) if any two blocks 
intersect in either $x$ or $y$ points.

Let $\bD=\{\cP, \,\cB \}$ be a $2$-$(v, k, \lambda)$ symmetric design, where $\cB=\{B_1,\, B_2,\, \cdots, \, B_v\}$ 
and $v \geq 2$. 
Then 
\begin{itemize}
\item $(B_1, \, \{B_2 \cap B_1,\, B_3 \cap B_1,\, \cdots,\, B_v \cap B_1 \})$ 
is a $2$-$(k,\, \lambda,\, \lambda-1)$ design, and called the \emph{derived design}\index{derived design} of $\bD$ with respect to $B_1$; 
\item $(\overline{B}_1,\, \{B_2 \cap \overline{B}_1,\, B_3 \cap \overline{B}_1,\, \cdots, B_v \cap \overline{B}_1 \})$ 
is a $2$-$(v-k,\, k-\lambda,\, \lambda)$ design, called the \emph{residual design}\index{residual design} of $\bD$ with respect to $B_1$, 
where $\overline{B_1}=\cP \setminus B_1$. 
\end{itemize}

If a symmetric design $\bD$ has parameters 
\begin{eqnarray}\label{eqn-SDPparameters} 
2-(2^{2m},\, 2^{2m-1}-2^{m-1},\, 2^{2m-2}-2^{m-1}),  
\end{eqnarray}
its derived designs
have parameters 
\begin{eqnarray*}  
2-( 2^{2m-1}-2^{m-1},\, 2^{2m-2}-2^{m-1},\, 2^{2m-2}-2^{m-1}-1), 
\end{eqnarray*}
and its residual designs have parameters 
\begin{eqnarray*}  
2-( 2^{2m-1}+2^{m-1},\, 2^{2m-2},\, 2^{2m-2}-2^{m-1}).  
\end{eqnarray*} 

A symmetric $2$-design is said to have the \emph{symmetric difference property}, 
or to be a  
\emph{symmetric SDP design}\index{symmetric SDP design},
(Kantor \cite{Kantor75, Kantor83}),  if the symmetric difference of 
any \emph{three} blocks is either a block or the complement of a block. 
Any derived or residual design of a symmetric SDP design
is quasi-symmetric, and has the property that the symmetric
difference of every two blocks is either a block or the complement
of a block. The derived and residual designs
of a symmetric SDP design are called quasi-symmetric SDP designs \cite{JT92}.
The binary codes of quasi-symmetric SDP designs 
give rise to an exponentially growing number of inequivalent
linear codes that meet the Grey-Rankin bound \cite{JT91}.
It was proved in \cite{Tonchev93} that any quasi-symmetric
SDP design can be embedded as a derived or a residual
design in exactly one (up to isomorphism) symmetric SDP design.

A coding-theoretical characterization of symmetric SDP
designs was given by
Dillon and Schatz \cite{DS87}, who proved that any symmetric
SDP design with parameters (\ref{eqn-SDPparameters})
is supported by the codewords of minimum weight
in a binary linear code $\C$ of length $2^{2m}$, dimension
$2m+2$ and weight enumerator given by
\begin{equation}
\label{sdpcode}
1 + 2^{2m}z^{2^{2m-1}-2^{m-1}} + (2^{2m+1} - 2)z^{2^{2m-1}}+ 2^{2m}z^{2^{2m-1}+2^{m-1}} + z^{2m}, 
\end{equation}
where $\C$ is spanned by the first order Reed-Muller code
$\RM_2(1, 2m)$
and a vector $u$ being the truth table (introduced in Section~\ref{sec-333})
of a bent function
in $2m$ variables, or equivalently, $u$ is the incidence vector
of a Hadamard difference set in the additive group of $\gf(2)^{2m}$
 with parameters
\begin{equation*}
\label{eqn-MenonHadamardPara}
(2^{2m}, \,2^{2m-1} \pm 2^{m-1}, \,2^{2m-2} \pm 2^{m-1}).  
\end{equation*}  

One of the objectives  of this paper is to give a coding-theoretical 
characterization of bent vectorial functions (Theorem \ref{thm-bentvectf}),
which generalizes the Dillon and Schatz characterization of
single bent functions \cite{DS87}.
  Another objective
 is to present
in Theorem \ref{main}
 a two-parameter family of 
 binary linear codes 
with parameters 
\[  [2^{2m},2m+1 +\ell,2^{2m-1} - 2^{m-1}],  \ m \ge 2, \ 1\le \ell \le m,  \]
that are based on bent vectorial functions
and support $2$-designs, despite that 
these codes do not satisfy the conditions
of the Assmus-Mattson theorem (see Theorem \ref{thm-AM1}).
The subclass of codes with $\ell=1$ consists of 
codes introduced by Dillon and Schatz \cite{DS87} that are based on bent functions
and support symmetric SDP designs. 
Examples of codes  with $\ell=m$ are given that are optimal
in the sence that they have the maximum possible minimum distance
for the given length and dimension, or 
have the largest known minimum distance for the given length and dimension
(see Note \ref{Note6} in Section \ref{Section4}, and the examples thereafter).

\section{The classical constructions of $t$-designs from codes}

A simple sufficient condition for the supports of 
codewords of any given weight in a linear code
to support a $t$-design is that the code admits
a $t$-transitive or $t$-homogeneous automorphism
group.
All codes considered in this paper are of even length $n$ of the form
$n=2^{2m}$. It is known that  any 2-homogeneous group of even degree is
necessarily 2-transitive (Kantor \cite{K69, K85}).

Another  sufficient condition is given by the  Assmus-Mattson theorem.
Let $\C$ be a $[v, \kappa, d]$ linear code over $\gf(q)$, and let $A_i =A_i(\C)$
 be the
number of codewords of Hamming weight $i$ in $\C$  ($0 \leq i \leq v$). 
For each $k$ with $A_k \neq 0$,  let $\cB_k$ denote
the set of the supports of all codewords of Hamming weight $k$ in $\C$, 
where the code coordinates 
are indexed by $1,2, \ldots, v$. Let $\cP=\{ 1, 2, \ldots, v \}$.  
The following
theorem, proved by Assumus and Mattson, provides  sufficient  conditions
 for the pair $(\cP, \cB_k)$  to be a $t$-design.

\begin{theorem}[The Assmus-Mattson Theorem \cite{AM69}]\label{thm-AM1}
Let $\C$ be a binary $[v, \kappa, d]$ code, and  let   $d^\perp$  be the minimum 
weight of the dual code $\C^\perp$.
Suppose that $A_i=A_i(\C)$ and $A_i^\perp=A_i(\C^\perp)$,  $0 \leq i \leq v$, are the
weight distributions of $\C$ and $\C^\perp$, respectively. Fix a positive integer $t$
with $t < d$, and let $s$ be the number of $i$ with $A_i^\perp \ne 0$ for $0 < i \leq v-t$.
If $s \leq d -t$, then
\begin{itemize}
\item the codewords of weight $i$ in $\C$ hold a $t$-design provided that $A_i \ne 0$ and
      $d \leq i \leq v$, and
\item the codewords of weight $i$ in the  code $\C^\perp$ hold a $t$-design provided that
      $A_i^\perp \ne 0$ and $d^\perp \leq i \leq v-t$.
\end{itemize}
\end{theorem}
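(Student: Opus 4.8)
\emph{Proof proposal.} The plan is to deduce the design property from a counting argument that, via the MacWilliams identities, reduces to controlling the weight distributions of shortened codes; the hypothesis $s\le d-t$ is precisely what makes the relevant linear system solvable. I begin with the combinatorial reduction. Since $\C$ is binary, the weight-$k$ codewords of $\C$ are in bijection with their supports, so $(\cP,\cB_k)$ is a $t$-design if and only if, for every $j$ with $0\le j\le t$ and every $j$-subset $S\subseteq\cP$, the number of weight-$k$ codewords of $\C$ whose support is disjoint from $S$ is a constant depending only on $j$; this is the standard inclusion--exclusion equivalence between "number of blocks through a fixed $j$-set" and "number of blocks missing a fixed $j$-set". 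Now that number is exactly $A_k$ of the code $\C_S$ obtained from $\C$ by shortening at $S$ (keep the codewords that vanish on $S$, delete those $|S|$ coordinates): weights of surviving codewords are unchanged, $\C_S$ has length $v-|S|$, and $d(\C_S)\ge d$. Hence it suffices to show that for $|S|\le t$ the weight distribution of $\C_S$ depends only on $|S|$.

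Next I would pass to the dual side. By the duality between shortening and puncturing, $\C_S^{\perp}$ is $\C^{\perp}$ punctured at $S$, a code of length $v-|S|$. The MacWilliams identities (equivalently, the Pless power moments) express each $A_i(\C_S)$ as a fixed linear combination of the weight distribution $\{A_w(\C_S^{\perp})\}_w$ together with the length $v-|S|$ and the size $|\C_S|$. On the $\C_S$ side one has the forced vanishing $A_1(\C_S)=\cdots=A_{d-1}(\C_S)=0$; reading the moment identities at the exponents where these zeros occur yields a linear system for the unknown low-order data of $\C_S^{\perp}$ (and for $|\C_S|$), in which the number of usable independent equations is on the order of $d-t$, while the number of genuine unknowns is at most $s$ — the count of weights $i\le v-t$ with $A_i^{\perp}\neq 0$. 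The hypothesis $s\le d-t$ is exactly the statement that the equations outnumber the unknowns, so the system pins down the required data uniquely in terms of $v$, $|S|$, $d$, and the $A_i^{\perp}$, none of which depends on the choice of $S$. Feeding this back through MacWilliams shows the weight distribution of $\C_S$, hence the count in the first paragraph, is independent of $S$, which proves the first bullet. The second bullet is obtained by the symmetric argument with the roles of $\C$ and $\C^{\perp}$ interchanged; the restriction $i\le v-t$ there enters precisely because the shortened-then-punctured construction applied to $\C^{\perp}$ lives on $v-t$ coordinates.

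The main obstacle is the middle step: making the linear-algebra bookkeeping exact. One has to identify which moment (or Krawtchouk) equations to extract, verify that the resulting coefficient matrix -- a Vandermonde/Krawtchouk-type matrix -- has full rank so that the solution is genuinely unique, and check carefully that under $s\le d-t$ the number of free parameters never exceeds the number of available equations across the whole relevant range of exponents. Secondary care is needed at the boundary: for weights $i$ near $v$ the blocks degenerate or one invokes complementation, and in the degenerate regime $v-t<d$ the shortened code $\C_S$ collapses to the zero code and the claim is immediate. Once these points are settled, the two bullets follow as above.
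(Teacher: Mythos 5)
The paper itself states this theorem without proof, quoting it from \cite{AM69}, so I am comparing your proposal with the standard proof (Assmus--Mattson; see also MacWilliams--Sloane, Ch.~6). Your architecture --- reduce the design property to $S$-independence of a weight count, pass to a dual code via shortening/puncturing duality, and determine the unknowns from a MacWilliams/power-moment linear system whose solvability is the hypothesis $s\le d-t$ --- is the right one, but there is a genuine gap at the exact point where you bound the number of unknowns by $s$. You shorten $\C$ at $S$, so the dual object carrying your unknowns is $\C_S^{\perp}=(\C^{\perp})^S$, the code $\C^{\perp}$ \emph{punctured} at $S$. The quantity $s$ counts the nonzero weights $i\le v-t$ of $\C^{\perp}$ itself; this bounds the number of nonzero weights of the \emph{shortened} dual $(\C^{\perp})_S$ (whose codewords keep their weights and all have weight $\le v-|S|$), not of the punctured dual. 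Puncturing a weight-$i$ codeword at $S$ produces weight $i-|\mathrm{supp}\cap S|$, anywhere in the range $[\,i-|S|,\,i\,]$, and codewords of $\C^{\perp}$ of weight exceeding $v-t$ also survive puncturing. Hence $(\C^{\perp})^S$ can have far more than $s$ nonzero weights, the system you set up is underdetermined, and the step ``the equations outnumber the unknowns'' fails. (The Vandermonde-invertibility issue you flag as the main obstacle is, by contrast, routine.)

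The hypothesis $s\le d-t$ is calibrated to the opposite pairing: puncture $\C$ at a $t$-set $T$ to get $\C^T$, of the same dimension (since $t<d$) and minimum distance $\ge d-t$, whose dual is the shortened code $(\C^{\perp})_T$ with at most $s\le d-t$ nonzero weights; the power-moment system then determines the weight distributions of both $\C^T$ and $(\C^{\perp})_T$ independently of $T$. Since $A_i((\C^{\perp})_T)$ is exactly the number of weight-$i$ codewords of $\C^{\perp}$ avoiding $T$, this yields the second bullet directly. For the first bullet, however, one must recover from the $T$-independence of $A_w(\C^T)$ the $T$-independence of the number of weight-$i$ codewords of $\C$ meeting $T$ in a prescribed number of points, and because puncturing mixes weights this requires an induction on $i$ starting at $i=d$ (where $A_{d-t}(\C^T)$ counts precisely the weight-$d$ codewords containing $T$) and then peeling off the already-determined contributions of smaller weights at each subsequent step. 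That induction is the missing idea in your write-up: your shortening reduction was designed to avoid it, but it is exactly what breaks your count of unknowns.
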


The parameter $\lambda$ of a $t$-$(v,w,\lambda)$ design 
supported by the codewords  of weight $w$ in a binary code  $\C$  is determined by 
\begin{equation*}
  A_w = \lambda { v \choose t}/{w \choose t}. 
  \end{equation*}

\section{Bent functions and bent vectorial functions}\label{sec-333} 

Let $f=f(x)$ be a Boolean function from $\gf(2^{n})$ to $\gf(2)$. The \emph{support} 
$S_f$ of $f$ is defined 
as
$$
S_f=\{x \in\gf(2^{n}) : f(x)=1\} \subseteq \gf(2^{n}). 
$$ 
The $(0,1)$ incidence vector of $S_f$, having its coordinates labeled
by the elements of  $\gf(2^n)$,  is called the {\it truth table} of $f$.

The {\em Walsh transform} of $f$ is defined by 
\begin{eqnarray*}
\hat{f}(w)=\sum_{x \in \gf(2^{n})} (-1)^{f(x)+\tr_{n/1}(wx)} 
\end{eqnarray*} 
where $w \in \gf(2^{n})$ and $\tr_{n/n'}(x)$ denotes the trace function from 
$\gf(2^n)$ to $\gf(2^{n'})$.  

Two Boolean functions $f$ and $g$ from $\gf(2^n)$ to $\gf(2)$ are called 
\emph{weakly affinely equivalent} or \emph{EA-equivalent} if there are an 
automorphism $A$ of $(\gf(2^n), +)$, a homomorphism $L$ from $(\gf(2^n),+)$ 
to $(\gf(2), +)$, an element $a \in \gf(2^n)$ and an element $b \in \gf(2)$ such 
that 
$$ 
g(x)=f(A(x)+a)+ L(x) +b 
$$  
for all $x \in \gf(2^n)$.

A  Boolean function $f$ from $\gf(2^{2m})$ to $\gf(2)$ is called a \emph{bent\index{bent}} 
function if
 $|\hat{f}(w)|=
2^{m}$ for every $w \in \gf(2^{2m})$.  
It is well known that 
a function $f$ from $\gf(2^{2m})$ to $\gf(2)$ is bent if and only if $S_f$ is 
a difference set in  $(\gf(2^{2m}),\,+)$ with  parameters (\ref{eqn-MenonHadamardPara}) 
\cite{Mesnagerbook}.

A  Boolean function $f$  from $\gf(2^{2m})$ to $\gf(2)$ is a bent function
if and only if its truth table is at Hamming distance
$2^{2m-1} \pm 2^{m-1}$ from every codeword of the
first order Read-Muller code $\RM_2(1, 2m)$ 
\cite[Theorem 6, page 426]{McS}.
It follows that 
\begin{eqnarray*} 
|S_f|=2^{2m-1} \pm 2^{m-1}. 
\end{eqnarray*} 

There are many constructions of bent functions. The reader is referred to \cite{CMSurvey} 
and \cite{Mesnagerbook} for detailed information about bent functions.

Let $\ell$ be a positive integer, and let $f_1(x), \cdots, f_\ell(x)$ be Boolean functions 
from $\gf(2^{2m})$ to $\gf(2)$. The function $F(x)=(f_1(x), \cdots, f_\ell(x))$ 
from $\gf(2^{2m})$ to $\gf(2)^\ell$
is called a 
$(2m,\ell)$ {\it vectorial} Boolean function. 

A $(2m,\ell)$ vectorial Boolean function $F(x)=(f_1(x), \cdots, f_\ell(x))$ is called a \emph{bent vectorial function} 
if $\sum_{j=1}^\ell a_j f_j(x)$ is a bent function for each nonzero $(a_1, \cdots, a_\ell) 
\in \gf(2)^\ell$. 

For another equivalent definition of bent vectorial functions, 
see \cite{CMbentvect} or \cite[Chapter 12]{Mesnagerbook}.

Bent vectorial functions exist only when $\ell \leq m$ (cf. \cite[Chapter 12]{Mesnagerbook}).
There are a number of known constructions 
of bent vectorial functions. The reader is referred to \cite{CMbentvect} and 
\cite[Chapter 12]{Mesnagerbook} for detailed information. Below we present a specific 
construction of bent vectorial functions from \cite{CMbentvect}.  

\begin{example}\label{exam-bentvectfunc1} 
\cite{CMbentvect}.
Let $m \geq 1$ be an odd
 integer,  $\beta_1, \beta_2, \cdots, \beta_{m}$ be a basis of 
$\gf(2^{m})$ over $\gf(2)$, and let $u \in \gf(2^{2m}) \setminus \gf(2^m)$. Let $i$ 
be a positive integer with $\gcd(2m, i)=1$. Then 
\begin{eqnarray*}
\left(\tr_{2m/1}(\beta_1 u x^{2^i+1}), \tr_{2m/1}(\beta_2 u x^{2^i+1}), \cdots, 
\tr_{2m/1}(\beta_{m} u x^{2^i+1})  \right) 
\end{eqnarray*} 
is a $(2m, m)$ bent vectorial function. 
\end{example} 

Under a basis of $\gf(2^{\ell})$ over $\gf(2)$, $(\gf(2^\ell), +)$ and $(\gf(2)^\ell, +)$ 
are isomorphic. Hence, any vectorial function $F(x)=(f_1(x), \cdots, f_\ell(x))$ from 
$\gf(2^{2m})$ to $\gf(2)^\ell$ can be viewed as a function from $\gf(2^{2m})$ to 
$\gf(2^\ell)$.   

It is well known that a function $F$ from $\gf(2^{2m})$ to $\gf(2^\ell)$ is bent if and 
only if $\tr_{\ell/1}(aF(x))$ is a bent Boolean function for all $a \in \gf(2^\ell)^*$. 
Any such vectorial function $F$ can be expressed as $\tr_{2m/\ell}(f(x))$, where $f$ is 
a univariate polynomial.   
This presentation of bent vectorial functions is more compact. We give two examples  
of bent vectorial functions in this form. 

\begin{example}\label{exam-bentvectfunc2} 
(cf. \cite[Chapter 12]{Mesnagerbook}).
Let $m>1$ and $i \geq 1$ be integers such that $2m/\gcd(i, 2m)$ is even. 
Then $\tr_{2m/m}(a x^{2^i+1})$ is bent if and only if 
$\gcd(2^i+1, 2^m+1) \neq 1$ and 
$a \in \gf(2^{2m})^* \setminus \langle \alpha^{\gcd(2^i+1, 2^m+1)} \rangle$, 
where $\alpha$ is a generator of $\gf(2^{2m})^*$.  
\end{example}

\begin{example}\label{exam-bentvectfunc3} 
(cf. \cite[Chapter 12]{Mesnagerbook}).
Let $m>1$ and $i \geq 1$ be integers such that $\gcd(i, 2m)=1$. Let $d=2^{2i}-2^i+1$. 
Let $m$ be odd. 
Then $\tr_{2m/m}(a x^{d})$ is bent if and only if 
$a \in \gf(2^{2m})^* \setminus \langle \alpha^{3} \rangle$, 
where $\alpha$ is a generator of $\gf(2^{2m})^*$.  
\end{example}   

\section{A construction of codes from bent vectorial functions} 
\label{Section4}

Let $q=2^{2m}$,  let 
$\gf(q)=\{u_1, u_2, \cdots, u_{q}\}$,  and let $w$ be a generator of $\gf(q)^*$. 
For the purposes of what follows, it is convenient to use the following 
generator matrix  of the binary $[2^{2m}, 2m+1,2^{2m-1}]$
 first-order Reed-Muller code $\RM_2(1,2m)$:
\begin{eqnarray*}
G_0=\left[ 
\begin{array}{cccc}
1   &  1   & \cdots  & 1 \\
\tr_{2m/1}(w^0u_1)  & \tr_{2m/1}(w^0u_2)  & \cdots & \tr_{2m/1}(w^0u_q)  \\
\vdots  & \vdots  & \ddots  & \vdots \\
\tr_{2m/1}(w^{2m-1}u_1)  & \tr_{2m/1}(w^{2m-1}u_2)  & \cdots & \tr_{2m/1}(w^{2m-1}u_q)  
\end{array} 
\right].  
\end{eqnarray*} 
The weight enumerator of $\RM_2(1,2m)$ is
\begin{equation}\label{rm1} 
1+(2^{2m+1}-2)z^{2^{2m-1}} + z^{2^{2m}}.
\end{equation} 
Two binary linear codes are equivalent if there is a permutation of coordinates 
that sends the first code to the second. 
Up to equivalence, $\RM_2(1,2m)$ is the unique linear binary
code with parameters  $[2^{2m}, 2m+1,2^{2m-1}]$ \cite{DS87}.
Its dual code is the $[2^{2m}, 2^{2m}- 1 -2m,4]$
Reed-Muller code of order $2m-2$.
Both codes hold 3-designs since they are invariant
under a 3-transitive affine group.
Note that  $\RM_2(1,2m)^\perp$ is 
the unique, up to equivalence, binary linear code
for the given parameters, hence  it is equivalent to the
extended binary linear Hamming code.

Let $F(x)=(f_1(x), f_2(x), \cdots, f_\ell(x))$ be a $(2m, \ell)$ vectorial function from $\gf(2^{2m})$ 
to $\gf(2)^\ell$. For each $i$,  $1 \le i \le \ell$, 
we define a binary vector
\begin{eqnarray*}
F_i=(f_i(u_1), f_i(u_2), \cdots, f_i(u_q)) \in \gf(2)^{2^{2m}}, 
\end{eqnarray*}   
which is the truth table of the Boolean function $f_i(x)$ introduced in Section \ref{sec-333}.

Let $\ell$ be an integer in the range $1 \le \ell \le m$.
We now define a $(2m+1 + \ell) \times 2^{2m}$ matrix 
\begin{eqnarray}
\label{G}
G=G(f_{1}, \cdots, f_{\ell})=\left[ 
\begin{array}{c}
G_0 \\
F_{1} \\
\vdots \\
F_{\ell} 
\end{array} 
\right], 
\end{eqnarray}
where $G_0$ is the generator matrix of $\RM_2(1,2m)$.
Let $\C(f_{1}, \cdots, f_{\ell})$ denote the binary code of length $2^{2m}$ with generator matrix $G(f_{1}, \cdots, f_{\ell})$ given by (\ref{G}).  The dimension of the code has the following lower and upper bounds: 
$$ 
2m+1 \leq \dim(\C(f_{1}, \cdots, f_{\ell})) \leq 2m+1+\ell. 
$$ 

The following theorem gives a coding-theoretical characterization of  bent vectorial functions. 

\begin{theorem}\label{thm-bentvectf} 
A  $(2m, \ell)$ vectorial function $F(x)=(f_1(x), f_2(x), \cdots, f_\ell(x))$ from $\gf(2^{2m})$ to $\gf(2)^\ell$ is a bent vectorial function if and only if the  code  
$\C(f_1, \cdots, f_\ell)$   with generator matrix $G$ given by  (\ref{G}) has 
weight enumerator 
\begin{eqnarray}\label{eqn-wtenumerator111}
1 + (2^\ell-1)2^{2m} z^{2^{2m-1} - 2^{m-1}} + 2(2^{2m}-1)z^{2^{2m-1}} 
+ (2^\ell-1)2^{2m} z^{2^{2m-1} + 2^{m-1}} + z^{2^{2m}}.  
\end{eqnarray} 
\end{theorem}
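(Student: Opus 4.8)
The plan is to analyze the codewords of $\C(f_1,\dots,f_\ell)$ by writing a general element as $\mathbf{v}_{\mathbf{a}} + \mathbf{r}$, where $\mathbf{r}$ ranges over the $\RM_2(1,2m)$ part (i.e.\ an affine Boolean function $\tr_{2m/1}(wx)+c$ on $\gf(2^{2m})$, giving a row of $G_0$-combinations) and $\mathbf{v}_{\mathbf{a}}$ is the truth table of $g_{\mathbf{a}}(x) := \sum_{j=1}^\ell a_j f_j(x)$ for a fixed $(a_1,\dots,a_\ell) \in \gf(2)^\ell$. Then every codeword is the truth table of $g_{\mathbf{a}}(x) + \tr_{2m/1}(vx) + c$ for some $\mathbf{a}$, $v \in \gf(2^{2m})$, $c \in \gf(2)$. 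When $\mathbf{a} = \mathbf{0}$ we recover exactly $\RM_2(1,2m)$, contributing the terms $1 + 2(2^{2m}-1)z^{2^{2m-1}} + z^{2^{2m}}$ from (\ref{rm1}) (note $2^{2m+1}-2 = 2(2^{2m}-1)$, and the all-ones vector sits in $\RM_2(1,2m)$). For each of the $2^\ell - 1$ nonzero choices of $\mathbf{a}$, the $2 \cdot 2^{2m}$ codewords $g_{\mathbf{a}} + (\text{affine})$ have Hamming weights equal to the distances from the truth table of $g_{\mathbf{a}}$ to the codewords of $\RM_2(1,2m)$.

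The forward direction ($F$ bent vectorial $\Rightarrow$ the weight enumerator is (\ref{eqn-wtenumerator111})) then follows from the Dillon--Schatz-type fact already quoted in the excerpt: a Boolean function $h$ on $\gf(2^{2m})$ is bent if and only if its truth table is at Hamming distance $2^{2m-1} \pm 2^{m-1}$ from every codeword of $\RM_2(1,2m)$ (\cite[Theorem 6, p.~426]{McS}). Since $F$ bent vectorial means precisely that $g_{\mathbf{a}}$ is bent for every nonzero $\mathbf{a}$, each such coset contributes $2^{2m}$ words of weight $2^{2m-1}-2^{m-1}$ and $2^{2m}$ words of weight $2^{2m-1}+2^{m-1}$ — we should check that the split is exactly half-and-half, which follows because complementing (adding the all-ones affine function) is a weight-complementing involution on the coset: $\wt = 2^{2m-1}-2^{m-1}$ pairs with $\wt = 2^{2m-1}+2^{m-1}$, so the two counts coincide and each equals $2^{2m}$. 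Summing over the $2^\ell-1$ cosets gives the stated enumerator; in particular $\dim \C = 2m+1+\ell$ since all $2^{2m+\ell+1}$ functions $g_{\mathbf{a}} + (\text{affine})$ are distinct (the $\RM_2(1,2m)$ cosets indexed by distinct $\mathbf{a}$ are disjoint, because if $g_{\mathbf{a}} - g_{\mathbf{a}'}$ were affine then $g_{\mathbf{a}+\mathbf{a}'}$ would be affine, contradicting bentness unless $\mathbf{a}=\mathbf{a}'$).

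For the converse, suppose the code has weight enumerator (\ref{eqn-wtenumerator111}). First, the dimension must be $2m+1+\ell$ (the total word count is $2^{2m+1+\ell}$), so the cosets of $\RM_2(1,2m)$ indexed by nonzero $\mathbf{a}$ are all distinct and distinct from $\RM_2(1,2m)$ itself; hence each $g_{\mathbf{a}}$, $\mathbf{a}\neq\mathbf{0}$, is non-affine. The nonzero weights appearing are only $2^{2m-1}-2^{m-1}$, $2^{2m-1}$, $2^{2m-1}+2^{m-1}$, and $2^{2m}$; the weight $2^{2m}$ word is the all-ones vector (already in $\RM_2(1,2m)$), and within any single nonzero coset $g_{\mathbf{a}} + \RM_2(1,2m)$ the all-ones vector is \emph{not} present, so the $2\cdot 2^{2m}$ words of that coset all have weight in $\{2^{2m-1}-2^{m-1},\,2^{2m-1},\,2^{2m-1}+2^{m-1}\}$, i.e.\ the truth table of $g_{\mathbf{a}}$ is at distance in that set from every affine function. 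The point is to rule out the middle value $2^{2m-1}$: if some coset contained a word of weight exactly $2^{2m-1}$, a counting/nonexistence argument is needed to show this forces a contradiction with the total multiplicity $(2^\ell-1)2^{2m}$ of each extreme weight. Concretely, each nonzero coset contributes at most $2^{2m}$ words of weight $2^{2m-1}-2^{m-1}$ (the weight-$j$ words of a fixed coset being indexed by the $v$ with $\hat g_{\mathbf{a}}(v)$ taking a prescribed value, and $\sum_v \hat g_{\mathbf{a}}(v)^2 = 2^{4m}$ caps the count of $v$ with $|\hat g_{\mathbf{a}}(v)| = 2^m$ at $2^{2m}$, with equality iff $g_{\mathbf{a}}$ is bent); since there are $2^\ell-1$ nonzero cosets and the enumerator demands exactly $(2^\ell-1)2^{2m}$ such words in total, every coset must achieve the cap, forcing each $g_{\mathbf{a}}$ to be bent, hence $F$ bent vectorial.

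\textbf{Main obstacle.} The forward direction is essentially a bookkeeping assembly of the known single-bent-function characterization across the $2^\ell-1$ cosets, plus the disjointness observation; the genuinely substantive step is the converse, specifically the averaging/extremality argument that upgrades "the weight enumerator has these multiplicities" to "\emph{every} coset is an all-bent coset." The clean way is via Parseval: for each nonzero $\mathbf{a}$, $\sum_{v \in \gf(2^{2m})} \hat g_{\mathbf{a}}(v)^2 = 2^{4m}$, and the number of weight-$(2^{2m-1}\mp 2^{m-1})$ words in the coset $g_{\mathbf{a}} + \RM_2(1,2m)$ equals $\#\{v : \hat g_{\mathbf{a}}(v) = \pm 2^m\}$, which is at most $2^{2m}$ with equality precisely when all $|\hat g_{\mathbf{a}}(v)| \le 2^m$ is tight — i.e.\ $g_{\mathbf{a}}$ bent. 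Matching the global count $(2^\ell-1)2^{2m}$ against the sum of these per-coset counts, each bounded by $2^{2m}$, forces equality coset-by-coset. Making this Parseval-to-weight dictionary precise (relating $\hat g_{\mathbf{a}}(v)$ to $\wt(g_{\mathbf{a}} + \tr_{2m/1}(vx))$ via $\wt = 2^{2m-1} - \tfrac12 \hat g_{\mathbf{a}}(v)$, summing over the sign choices, and handling the constant $c$) is the one place where care is required, but it is standard.
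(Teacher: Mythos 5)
Your proposal is correct and follows essentially the same route as the paper: decompose $\C(f_1,\dots,f_\ell)$ into cosets of $\RM_2(1,2m)$ indexed by $\mathbf{a}\in\gf(2)^\ell$, invoke the characterization of bentness as being at Hamming distance $2^{2m-1}\pm 2^{m-1}$ from every codeword of $\RM_2(1,2m)$, and use self-complementarity for the half-and-half split of the extreme weights. The only divergence is in the converse, where the paper rules out middle-weight words in the nonzero cosets by (implicitly) noting that the weights $0$, $2^{2m-1}$ and $2^{2m}$ in \eqref{eqn-wtenumerator111} are already fully exhausted by the subcode $\RM_2(1,2m)$, whereas you reach the same conclusion by capping the number of extreme-weight words per coset at $2^{2m}$ via the Walsh-transform dictionary and matching the global count --- both are valid one-line counting arguments, and yours is if anything spelled out more carefully than the paper's terse ``it then follows.''
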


\begin{proof}
By the definition of $G$, the code $\C(f_1, \cdots, f_\ell)$ contains the first-order Reed-Muller code 
$\RM_2(1, 2m)$ as a subcode, having weight enumerator (\ref{rm1}).

It follows from (\ref{G}) that every codeword of $\C(f_1, \cdots, f_\ell)$ must be the truth table of a Boolean function of the form   
\begin{eqnarray*}
f_{(u, v, h)}(x)=\sum_{i=1}^\ell u_i f_i(x) + \sum_{j=0}^{2m-1} v_j \Tr_{2m/1}(w^jx) + h,  
\end{eqnarray*} 
where $u_i, v_j, h \in \gf(2)$, $x \in \gf(2^{2m})$. 

Suppose that $F(x)=(f_1(x), f_2(x), \cdots, f_\ell(x))$ is a $(2m,\ell)$
bent vectorial function. 
When $(u_1, \cdots, u_\ell)=(0, \cdots, 0)$, $(v_0, v_1, \cdots, v_{2m-1})$ runs 
over $\gf(2)^{2m}$ and $h$ runs over $\gf(2)$, the truth tables of the functions 
$f_{(u, v, h)}(x)$ form the code $\RM_2(1, 2m)$. Whenever $(u_1, \cdots, u_\ell) \neq 
(0, \cdots, 0)$, it follows from (\ref{G}) that $f_{(u, v, h)}(x)$ is a bent function, and the 
corresponding codeword has Hamming weight $2^{2m-1} \pm 2^{m-1}$. 
Since the all-one vector belongs to $\RM_2(1, 2m)$,
the code $\C(f_1, \cdots, f_\ell)$ is self-complementary, and the desired weight 
enumerator of $\C(f_1, \cdots, f_\ell)$ follows.   

Suppose that $\C(f_1, \cdots, f_\ell)$ has  weight enumerator given by
\eqref{eqn-wtenumerator111}. 
Then $\C(f_1, \cdots, f_\ell)$ has dimension $2m+1+\ell$. Consequently, 
$\sum_{i=1}^\ell u_i f_i(x)$ is the zero function if and only if 
$(u_1, \cdots, u_\ell)=(0, \cdots, 0)$. 
It then follows that the codewords corresponding to $f_{(u, v, h)}(x)$ must have Hamming weight 
$2^{2m-1} \pm 2^{m-1}$ for all $u=(u_1, \cdots, u_\ell) \neq (0, \cdots, 0)$ and 
all $(v_0, v_1, \cdots, v_{2m-1}) \in \gf(2)^{2m}$. Notice that 
$$ 
\sum_{j=0}^{2m-1} v_j \Tr_{2m/1}(w^jx) 
$$ 
ranges over all linear functions from $\gf(2^m)$ to $\gf(2)$ when 
$(v_0, v_1, \cdots, v_{2m-1})$ runs over $\gf(2)^{2m}$.  
Consequently, $F(x)$ is a bent 
vectorial function.     

\end{proof} 

\begin{note}
\label{Note6}
Let $F(x)=(f_1(x), f_2(x), \cdots, f_m(x))$  
be a bent vectorial function from $\gf(2^{2m})$ to $\gf(2)^m$. 
Then the code $\C(f_1, \cdots, f_m)$ has parameters 
$$ [2^{2m}, 3m+1, 2^{2m-1} - 2^{m-1}]. $$
In particular, if $m=2$, any code $\C(f_1, f_2)$
based on a bent vectorial function from $\gf(2^{4})$ to $\gf(2)^2$
 has parameters $[16, 7, 6]$ and is optimal (cf. \cite{MG}).
An $[n,k,d]$ code is optimal if $d$ is the maximum possible
minimum distance for the given $n$ and $k$.
If $m=3$, any code $\C(f_1, f_2, f_3)$
based on a bent vectorial function from $\gf(2^{6})$ to $\gf(2)^3$
 has  parameters $[64, 10, 28]$ and is optimal \cite{MG}. 
If $m=4$, any code $\C(f_1, \cdots, f_6)$
based on a bent vectorial function from $\gf(2^{8})$ to $\gf(2)^4$
 has 
parameters $[256, 13, 120]$ and has the largest known
minimum distance for the given code length and dimension \cite{MG}.   
\end{note}

\begin{theorem}
\label{ex6}
Up to equivalence, there is exactly one $[16,7,6]$ code that 
can be obtained from
a $(4,2)$ bent vectorial function.
\end{theorem}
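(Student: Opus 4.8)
The plan is to show that any $[16,7,6]$ code arising from a $(4,2)$ bent vectorial function is equivalent to a fixed code, by first pinning down the structure forced by Theorem~\ref{thm-bentvectf} and then invoking uniqueness of the ambient Reed--Muller code together with a classification of bent functions in $4$ variables up to the relevant equivalence. By Theorem~\ref{thm-bentvectf}, a $(4,2)$ bent vectorial function $F=(f_1,f_2)$ gives a code $\C(f_1,f_2)$ of dimension $7$ containing $\RM_2(1,4)$ as a subcode of codimension $2$, with the weight enumerator $1+48z^6+30z^8+48z^{10}+z^{16}$. The three nonzero cosets of $\RM_2(1,4)$ in $\C(f_1,f_2)$ are the truth tables of the coset representatives $f_1,f_2,f_1+f_2$, each of which is bent; thus $\C(f_1,f_2)$ is determined by the triple of bent functions $(f_1,f_2,f_1+f_2)$, equivalently by the two-dimensional subspace of $\RM_2(2,4)/\RM_2(1,4)$ they span, all of whose nonzero elements are bent cosets.

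First I would reduce the equivalence problem for the code to an equivalence problem for such subspaces. Two codes $\C(f_1,f_2)$ and $\C(g_1,g_2)$ are equivalent iff some coordinate permutation carries one to the other; since $\RM_2(1,4)$ is, up to equivalence, the unique $[16,5,8]$ code \cite{DS87} and is the subcode of $\C(f_1,f_2)$ of all words of weight $0,8,16$ together with... more precisely, $\RM_2(1,4)$ is recoverable from $\C(f_1,f_2)$ as a canonical subcode (it is spanned by the weight-$8$ words, or characterized intrinsically), any equivalence between the two codes must stabilize this copy of $\RM_2(1,4)$ and hence lies in its automorphism group, the affine group $\AGL(4,2)$ acting on $\gf(2)^4$ together with translations by $\RM_2(1,4)$. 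So the classification reduces to: classify, up to the action of $\AGL(4,2)$ and addition of affine functions, the $2$-dimensional spaces $\langle f_1,f_2\rangle$ of Boolean functions on $\gf(2)^4$ all of whose seven... (three) nonzero members are bent. Equivalently, this is exactly EA-equivalence of bent vectorial $(4,2)$-functions.

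Next I would carry out the finite computation: in $4$ variables every bent function is EA-equivalent to $x_1x_2+x_3x_4$ (all bent functions on $\gf(2)^4$ lie in a single EA-class, a classical fact), and one enumerates the $2$-dimensional subspaces of the $\binom{4}{2}=6$-dimensional quotient $\RM_2(2,4)/\RM_2(1,4)$ whose nonzero cosets are all bent; the bent cosets correspond to nondegenerate alternating (symplectic) forms on $\gf(2)^4$, and a $2$-space of symmetric bilinear forms with all nonzero members nondegenerate is a classical object. One checks that $\AGL(4,2)$ acts transitively on such $2$-spaces, which yields that $\C(f_1,f_2)$ is unique up to equivalence; combined with Note~\ref{Note6} (any such code has parameters $[16,7,6]$), this gives the theorem. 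The main obstacle is the transitivity claim in this last step: verifying that the general linear / symplectic machinery forces a single orbit, rather than merely a small finite list of orbits all giving equivalent codes. I expect this to be handled either by a direct orbit count (the number of valid $2$-spaces times the stabilizer order equals $|\GL(4,2)|$, i.e.\ $|\mathrm{Sp}$-type counting) or, failing a clean conceptual argument, by a short exhaustive machine check over the $6$-dimensional space of quadratic cosets, which is entirely routine.
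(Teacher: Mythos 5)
Your proposal is correct in outline but takes a genuinely different route from the paper. The paper works entirely inside $\RM_2(2,4)$ at the level of codewords: it partitions the $448$ weight-$6$ words into $28$ sets of $16$ (the symmetric SDP designs of the $28$ single-bent-function codes $\C(f)$), verifies by a machine check that exactly $56$ unions of three such sets generate a code with weight enumerator $1+48z^6+30z^8+48z^{10}+z^{16}$, computes that one explicit such code has automorphism group of order $5760$, and concludes by orbit--stabilizer that the orbit under $\mathrm{Aut}(\RM_2(1,4))$ (of order $322560=56\cdot 5760$) already contains all $56$ codes, so they are pairwise equivalent. You instead reduce to classifying $2$-dimensional subspaces of $\RM_2(2,4)/\RM_2(1,4)\cong$ (alternating bilinear forms on $\gf(2)^4$) all of whose nonzero elements are nondegenerate, up to the induced $\GL(4,2)$-action. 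Your reduction is sound: since every bent function in $4$ variables has degree at most $2$, the code lies in $\RM_2(2,4)$; since the $30$ weight-$8$ words of $\C(f_1,f_2)$ span exactly the copy of $\RM_2(1,4)$, that subcode is intrinsic, so any code equivalence is induced by $\mathrm{AGL}(4,2)$, which acts on the quotient through $\GL(4,2)$ (a minor slip: the forms are alternating, not merely symmetric, and ``translations by $\RM_2(1,4)$'' are not coordinate permutations, though this is harmless since they act trivially on the quotient). What your approach buys is a cleaner conceptual frame (the $28$ bent cosets are exactly the $28$ symplectic forms, $|\GL(4,2)|/|\mathrm{Sp}(4,2)|=20160/720=28$); what it does not yet deliver is the one step that actually proves the theorem, namely transitivity of $\GL(4,2)$ on the $56$ admissible $2$-spaces, which you explicitly leave open. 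That step is exactly where the content lies, and the paper closes it only by computation; your orbit-count suggestion amounts to the same orbit--stabilizer argument the paper uses ($20160/56=360$, matching $5760/16$), so either way a finite verification (that there are $56$ such spaces and that a stabilizer has order $360$, or equivalently the paper's Magma computation) must still be supplied before the proof is complete.
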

\begin{proof}
The weight enumerator of the second order Reed-Muller code $\RM_{2}(2,4)$
is given by 
\begin{equation*} 
1+140z^4 + 448z^6 + 870z^8 + 448z^{10} + 140z^{12} + z^{16}.
\end{equation*}
The  truth table  of  a bent function $f$ from $\gf(2^4)$ to $\gf(2)$
is a codeword $c_f$ of
$\RM_{2}(2,4)$ of weight 6. The linear code $\C(f)$ spanned by
$c_f$ and $\RM_{2}(1,4)$ is a subcode of $\RM_{2}(2,4)$
of dimension 6, having  weight enumerator 
\begin{equation*} 
1 + 16z^6 + 30z^8 + 16z^{10} + z^{16}. 
\end{equation*} 
The codewords of $\C(f)$ of weight 6 form a symmetric 2-$(16,6,2)$
SDP design, whose blocks correspond to the supports of 16 bent functions.

Now, let $(f_1,f_2)$ be a $(4,2)$ bent vectorial function. Then,
the intersection of the codes $\C(f_1)$, $\C(f_2)$ consists of the first order
Reed-Muller code $\RM_{2}(1,4)$. It follows that the set of 448
codewords of weight 6 in $\RM_{2}(2,4)$ is a union $\cal{U}$ of
28 pairwise disjoint subsets of size 16, corresponding to the incidence
matrices of symmetric 2-$(16,6,2)$ SDP designs associated 
with 28 different $[16,6]$  codes defined by single bent functions.

If $\C(f_1,f_2)$ is a $[16,7]$ code defined by a bent vectorial
function $(f_1,f_2)$, its weight enumerator is given by 
\begin{equation}\label{f1f2}
1 + 48z^6 + 30z^8 + 48z^{10} +z^{16}.
\end{equation}
The  set of 48 codewords of weight 6 of $\C(f_1,f_2)$ is a union
of the incidence matrices of three  SDP designs from $\cal{U}$
with pairwise disjoint sets of blocks.
A quick 
check  shows that there are exactly 56 such
collections of 48 codewords that generate a code  having weight
enumerator  (\ref{f1f2}). Therefore, the number of distinct
$[16,7,6]$ subcodes of $\RM_{2}(1,4)$ based on $(4,2)$ bent vectorial functions
is 56.
The $7 \times 16$  generator matrix $G$ of one such $[16,7,6]$ code is listed below: 
\begin{eqnarray*}
\left[  
\begin{array}{cccccccccccccccc}
 0& 0& 0& 0& 0& 0& 0& 0& 1& 1& 1& 1& 1& 1& 1& 1 \\
 0& 0& 0& 0& 1& 1& 1& 1& 0& 0& 0& 0& 1& 1& 1& 1 \\ 
 0& 0& 1& 1& 0& 0& 1& 1& 0& 0& 1& 1& 0& 0& 1& 1 \\ 
 0& 1& 0& 1& 0& 1& 0& 1& 0& 1& 0& 1& 0& 1& 0& 1 \\ 
 1& 1& 1& 1& 1& 1& 1& 1& 1& 1& 1& 1& 1& 1& 1& 1 \\ 
 0& 0& 0& 1& 0& 1& 1& 1& 0& 1& 0& 0& 0& 0& 1& 0 \\ 
 0& 0& 0& 0& 0& 1& 0& 1& 0& 0& 1& 1& 0& 1& 1& 0
\end{array}
\right]. 
\end{eqnarray*}
The first five rows of $G$ form a generator matrix of $\RM_{2}(1,4)$,
while the last two rows are codewords of weight 6 in $\RM_{2}(2,4)$.
 The full automorphism group of the $[16,7,6]$ code generated by $G$
is of order 5760. Since the order of the automorphism group of
$\RM_{2}(1,4)$ is 322560, and
\[ 322560/5760 = 56, \]
it follows that all 56 $[16,7,6]$ codes based on $(4,2)$ bent vectorial functions
are pairwise equivalent.
\end{proof}

The next two examples illustrate that there are at least
three inequivalent optimal $[64,10,28]$ codes that are obtainable
from bent vectorial functions from $\gf(2^{6})$ to $\gf(2)^3$.
The parameters  $[64,10,28]$ correspond to $m=3$ in Note \ref{Note6}.

\begin{example}
\label{bch6410}
The binary cyclic $[63,10]$ code $\C$ with parity check polynomial
$h(x)=(x+1)(x^3 + x^2 +1)(x^6 + x^5 + x^4 + x + 1)$
has weight enumerator  
\[
1 + 196z^{27} + 252z^{28} + 63z^{31} +63z^{32}+252z^{35} + 
196z^{36} +z^{63}.
\] 
The $[63,7]$ subcode $\C'$ of $\C$ having check polynomial
$h'(x)=(x+1)(x^6 + x^5 + x^4 + x + 1)$ has weight enumerator
\[
1 + 63z^{31} +63z^{32}+ z^{63}.
\] 
The extended $[64,7]$ code $(\C')^*$ of $\C'$ has weight  enumerator
\[ 1 + 126z^{32} + z^{64}, \]
hence, $(\C')^*$ is equivalent to  the first order Reed-Muller code $\RM_{2}(1,6)$.
The extended $[64,10]$ code $\C^*$ of $\C$ has weight  enumerator
given by 
\begin{equation}
\label{641028}
 1 + 448z^{28} + 126z^{32} + 448 z^{36} + z^{64}. 
 \end{equation}
Since $\C^*$ contains a copy of  the first order Reed-Muller code  $\RM_{2}(1,6)$ as a subcode, it follows from Theorem \ref{thm-bentvectf}
that $\C^*$ can be obtained from a $(6,3)$ bent vectorial function
from $\gf(2^6)$ to $\gf(2^3)$.
The full automorphism group of $\C^*$ is of order
\[677,376 = 2^9 \cdot 3^3 \cdot 7^2. \]
Magma  was used for these computations.
\end{example}

\begin{example}
\label{example9}
Let $M$ be the 7 by 64 $(0,1)$-matrix with the following structure:
the $i$th column of the  6 by 64 submatrix $M'$ of $M$ consisting
of its first six rows is the binary
presentation of the number $i$ ($i =0, 1, \ldots 63$), while
the last row of $M$ is the all-one row. Clearly, $M$ is a generator
matrix of a binary linear $[64,7]$ code equivalent to  the first order Reed-Muller code
$\RM_{2}(1,6)$.

The first six rows of $M$ can be viewed as the truth tables
of the single Boolean variables $x_1, x_2, \dots x_6$, while
the seventh row of $M$ is the truth table of the constant ${\bf 1}$.

We consider the  Boolean bent functions given by 
\begin{eqnarray*}
f_{1}(x_1,\ldots, x_6) & = & x_{1}x_6 + x_{2}x_{5} + x_{3}x_4, \\
f_{2}(x_1,\ldots, x_6) & = & x_{1}x_5 + x_{2}x_4 + x_{3}x_5 + x_{3}x_6,\\
f_{3}(x_1,\ldots, x_6) & = & x_{1}x_4 + x_{2}x_5 + x_{2}x_6 +x_{3}x_4 + x_{3}x_5 + x_{5}x_{6},\\
f_{4}(x_1,\ldots, x_6) & = & x_{1}x_4 +x_{2}x_3 + x_{3}x_6 + x_{5}x_6.
\end{eqnarray*}
The vectorial functions $F_{1}=(f_{1}, f_{2}, f_{3})$, 
$F_{2}=(f_{1}, f_{2}, f_{4})$
give via Theorem \ref{thm-bentvectf} binary linear codes $\C_1, \ \C_2$ 
with parameters $[64,10,28]$,
having  weight enumerator given by (\ref{641028}).  

The automorphism groups of the codes  $\C_1, \ \C_2$
were computed using the computer-algebra package Magma
\cite{magma}.
 
The code $\C_1$ has full automorphism group of order
\[ 10,752 = 2^{9}\cdot 3\cdot 7. \]
The code $\C_2$ has full automorphism group of order
\[ 4,032 = 2^{6}\cdot 3^{2}\cdot 7. \]
Thus, $\C_1$, $\C_2$ and
 the extended cyclic code $\C^*$ from Example \ref{bch6410}
are pairwise inequivalent. 

We note that the code $\C_1$ 
cannot be equivalent to any extended cyclic code
because its group order is not divisible by 63.
\end{example}

\begin{note}
\label{note10}
The full automorphism group of $\C_1$ from Example \ref{example9}
cannot be 2-transitive
because  its order is not divisible by 63.
 Thus, the code $\C_1$ does not satisfy
 the classical sufficient condition to support 2-designs based on the
2-transitivity of its automorphism group (recall that according to
 \cite{K69},
any 2-homogeneous group of degree 64 is necessarily 2-transitive).

In addition, the minimum distance of its dual code
 ${\C_1}^\perp$ is 4, thus the Assmus-Mattson theorem guarantees only
 1-designs to be supported by $\C_1$.
 
We will prove in the next section that all codes obtained from bent vectorial
functions support 2-designs. 
\end{note}

\section{A construction of $2$-designs from bent vectorial functions} 


The following theorem establishes that
the binary codes based on bent vectorial functions
support 2-designs, despite that these codes do not meet 
the conditions of the Assmus-Mattson theorem 
for 2-designs.

\begin{theorem}\label{main}
Let  $F(x)=(f_1(x), f_2(x), \cdots, f_\ell(x))$
be a bent vectorial function from $\gf(2^{2m})$ to $\gf(2)^\ell$,
where $m\ge 2$ and  $1\le \ell \le m$.
Let $\C = \C(f_1, \cdots, f_\ell)$ be the binary linear code
with parameters 
$[2^{2m}, 2m+1 + \ell, 2^{2m-1} - 2^{m-1}]$
defined  in Theorem \ref{thm-bentvectf}.

(a) The codewords of $\C$ of minimum weight hold a 2-design $\bD$
with parameters 
\begin{equation}
\label{par}
2-(2^{2m}, 2^{2m-1} - 2^{m-1}, (2^\ell -1)(2^{2m-2} - 2^{m-1})).
\end{equation}
(b) The codewords of $\C$ of  weight $2^{2m-1} + 2^{m-1}$ hold a 
2-design $\overline{\bD}$ with parameters
\begin{equation}\label{prc}
 2-(2^{2m}, 2^{2m-1} + 2^{m-1}, (2^\ell -1)(2^{2m-2} + 2^{m-1})).
 \end{equation}

\end{theorem}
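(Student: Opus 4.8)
The plan is to exploit the rich transitive symmetry that $\C$ \emph{does} possess, even though its full automorphism group need not be 2-transitive. The key observation is that $\C$ contains $\RM_2(1,2m)$, and the group of affine transformations $x \mapsto x + b$ on $\gf(2^{2m})$ acts on the coordinate set $\gf(2^{2m})$ as a subgroup of $\Aut(\RM_2(1,2m))$; moreover, adding a constant function or a linear function $\Tr_{2m/1}(wx)$ to any $f_{(u,v,h)}$ again produces a function of the same form. So the translation group $T = \{x \mapsto x+b : b \in \gf(2^{2m})\}$ actually fixes the code $\C$ setwise: translating $F_i(x)$ by $b$ gives the truth table of $f_i(x+b)$, which differs from $F_i$ by a vector in $\RM_2(1,2m)$ precisely because each $f_i$ — being a component of a bent vectorial function in the affine-equivalence class used here, or more simply because we only need the \emph{derivative} structure — has the property that $f_i(x+b) - f_i(x)$ is affine whenever... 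Actually, the cleaner route is: $T$ need not stabilize $\C$ in general, so instead I would use a direct double-counting argument combined with the known structure of difference sets.

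First I would set up the counting. Fix two distinct points $p_1, p_2 \in \gf(2^{2m})$ and count pairs $(c, \{p_1,p_2\})$ where $c$ is a minimum-weight codeword whose support contains $\{p_1,p_2\}$. By Theorem~\ref{thm-bentvectf}, every minimum-weight codeword is the truth table of a bent function $f_{(u,v,h)}$ with $u \neq 0$, and its support $S_{f_{(u,v,h)}}$ is a Hadamard difference set with parameters $(2^{2m}, 2^{2m-1}-2^{m-1}, 2^{2m-2}-2^{m-1})$ in $(\gf(2^{2m}),+)$. The number of minimum-weight codewords is $(2^\ell-1)2^{2m}$. The heart of the matter is to show that the number containing a fixed pair $\{p_1,p_2\}$ does not depend on the pair. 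For a fixed nonzero $u = (u_1,\dots,u_\ell)$, the function $g_u := \sum u_i f_i$ is bent, and as $v$ ranges over $\gf(2)^{2m}$ and $h$ over $\gf(2)$, the truth tables of $g_u + \sum v_j \Tr(w^j x) + h$ are exactly the $2^{2m+1}$ translates-by-affine-functions of $g_u$; their supports are the $2^{2m}$ complementary pairs of translates of the difference set $S_{g_u}$ under the action $D \mapsto (D+a)$ and $D \mapsto \overline{D+a}$. Since $S_{g_u}$ is a difference set, a fixed pair $\{p_1,p_2\}$ with difference $\delta = p_1 - p_2 \neq 0$ lies in exactly $\lambda = 2^{2m-2}-2^{m-1}$ translates $S_{g_u}+a$ (counting by the difference-set property: $p_1, p_2 \in D + a \iff p_1 - a, p_2 - a \in D$, and the number of such $a$ is the number of ordered representations... which is $\lambda$); similarly it lies in $2^{2m-2}-2^{m-1}$ of the complements. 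Wait — I must recount: the total count of weight-$(2^{2m-1}-2^{m-1})$ supports containing $\{p_1,p_2\}$, summed over translates and complements of \emph{both} the difference set and its complement (since the complement of $S_{g_u}$ has parameters $(2^{2m},2^{2m-1}+2^{m-1},\ldots)$ and is a different weight), requires care. For part (a) the relevant supports of weight $2^{2m-1}-2^{m-1}$ are the translates $S_{g_u}+a$ (the "small" sets), and there are $2^{2m}$ of them for each $u$; a fixed pair lies in exactly $\lambda_{\mathrm{small}} = 2^{2m-2}-2^{m-1}$ of them by the difference-set identity. Summing over the $2^\ell - 1$ choices of $u$ gives $\lambda = (2^\ell-1)(2^{2m-2}-2^{m-1})$, independent of the pair, which is exactly (\ref{par}). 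Part (b) is identical with the "large" complementary sets, whose number of translates containing a fixed pair is $2^{2m-2}+2^{m-1}$, yielding (\ref{prc}).

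The main obstacle — and the step I would spend the most care on — is verifying that for each fixed nonzero $u$, as $(v,h)$ ranges over $\gf(2)^{2m+1}$, the multiset of supports of $f_{(u,v,h)}$ is precisely $\{S_{g_u}+a : a \in \gf(2^{2m})\} \cup \{\overline{S_{g_u}+a} : a \in \gf(2^{2m})\}$, each occurring exactly once, with the "small" supports being the $2^{2m}$ translates and the "large" ones their complements (or vice versa depending on which sign $|S_{g_u}|$ takes). This hinges on the fact that adding $\Tr_{2m/1}(yx)$ to a bent function $g$ translates its support: $S_{g + \Tr(yx)} = S_g + c$ for a suitable $c$ depending on $y$, which is the standard correspondence between the bent function and its difference set, together with the observation that the linear functions $\sum v_j \Tr(w^j x)$ realize all of $x \mapsto \Tr(yx)$ as $(v_j)$ ranges over $\gf(2)^{2m}$ (this is stated in the proof of Theorem~\ref{thm-bentvectf}). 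Once this identification is in hand, everything reduces to the translate-counting identity for Hadamard difference sets, which is routine. I would also remark that this argument simultaneously re-proves that $\C$ is self-complementary and pins down the weight enumerator, consistent with Theorem~\ref{thm-bentvectf}, so no separate verification of which codewords have which weight is needed.
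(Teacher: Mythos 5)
Your reduction to a fixed nonzero $u$ (summing the contribution of each bent component $g_u=\sum_i u_if_i$ over the $2^\ell-1$ choices of $u$) is a sensible plan, but the step you yourself flag as the ``main obstacle'' is in fact false, and the proof collapses there. You claim that $S_{g+\Tr_{2m/1}(yx)}=S_g+c$ for a suitable $c$, i.e.\ that the supports of $g_u+(\text{affine})$ are exactly the translates of the difference set $S_{g_u}$ and their complements. For this you would need $g(x)+\Tr_{2m/1}(yx)=g(x+c)+\epsilon$, i.e.\ that the derivative $D_cg(x)=g(x+c)+g(x)$ is affine and that every linear function arises this way. That holds precisely when $g$ is quadratic; for a bent function of algebraic degree $\ge 3$ (these exist for all $2m\ge 6$, e.g.\ cubic Maiorana--McFarland functions) almost no derivative is affine, so almost none of the sets $S_{g+\ell}$ is a translate of $S_g$ --- indeed the block set $\{S_{g+\ell}\}$ is not even translation-invariant unless $g$ is quadratic. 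The family $\{S_{g+\ell}:\ell \text{ affine}\}$ is the Dillon--Schatz SDP design, which merely shares parameters with the development $\{S_g+a\}$ but is a different design in general. Consequently your count of the pairs $\{p_1,p_2\}$ covered, which rests on the translate-counting identity for a \emph{single} difference set and its translates, does not apply: each $S_{g+\ell}$ is a different difference set, and the difference-set property of $S_{g_u}$ alone says nothing about how many members of this family contain a fixed pair. The statement you need per fixed $u$ is exactly the $\ell=1$ case of the theorem (the Dillon--Schatz theorem), so as written the argument is circular-or-wrong; it could be salvaged by citing \cite{DS87} for each $g_u$ and then summing over $u$, which would be a legitimate but not self-contained proof.

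For contrast, the paper avoids any per-component analysis. It observes that $d^\perp=4$ (via the MacWilliams transform of the weight enumerator \eqref{eqn-wtenumerator111}), so the codeword matrix of $\C$ is an orthogonal array of strength $3$; hence every pair of coordinates is covered by exactly $2^{2m+\ell-1}$ codewords with $(1,1)$ in those positions. Subtracting the known contributions of the all-one vector and of the weight-$2^{2m-1}$ codewords (which lie in $\RM_2(1,2m)$ and form a $3$-design), and pairing each minimum-weight codeword with its complement, yields a linear equation that forces the number $\lambda$ of minimum-weight supports through a fixed pair to be $(2^\ell-1)(2^{2m-2}-2^{m-1})$, independent of the pair. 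You may want to compare this with your approach: the orthogonal-array argument needs no structural claim about individual bent functions at all.
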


\begin{proof}
Since $\C$ contains $\RM_2(1,2m)$, and the minimum distance 
of $\RM_2(1,2m)^\perp$ is 4,
the minimum distance $d^{\perp}$ of $\C^{\perp}$ is at least 4. Applying the
MacWilliams transform (see, for example \cite[p. 41]{vanLint}) 
to the weight enumerator (\ref{eqn-wtenumerator111}) of $\C$
shows that $d^{\perp} =4$.
It follows from the Assmus-Mattson theorem (Theorem \ref{thm-AM1}) that the codewords
of any given nonzero weight $w< 2^{2m}$ in $\C$ hold a 1-design.

However, we will prove that $\C$ actually holds 2-designs, despite that
the Assmus-Mattson theorem guarantees only
 1-designs to be supported by $\C$.

Since the subcode $\RM_2(1,2m)$ of $\C$ contains
all codewords of $\C$ of weight $2^{2m-1}$, the codewords of this weight
hold a 3-design $\cal{A}$ with parameters 3-$(2^{2m}, 2^{2m-1}, 2^{2m-2} -1)$.
We note that $\cal{A}$ is a 2-design with
\begin{equation}\label{eq2m}
\lambda_2 = \frac{2^{2m}-2}{2^{2m-1} -2}\cdot(2^{2m-2} -1)=2^{2m-1}-1.
\end{equation}

Let $\bD$ be the 1-design supported by codewords of weight $2^{2m-1} - 2^{m-1}$.
Since the number of codewords of  weight $2^{2m-1} - 2^{m-1}$ is equal to 
$(2^\ell -1)2^{2m}$,
 $\bD$ is a 1-design with parameters 
 1-$(2^{2m}, 2^{2m-1} - 2^{m-1}, (2^\ell -1)(2^{2m-1} - 2^{m-1}))$.

 Every codeword of $\C$ of weight $2^{2m-1} + 2^{m-1}$ is the sum of
a codeword of weight $2^{2m-1} - 2^{m-1}$ and the all-one vector.
Thus, the codewords  of weight $2^{2m-1} + 2^{m-1}$ hold a 1-design $\overline{\bD}$
having parameters
1-$(2^{2m}, 2^{2m-1} + 2^{m-1}, (2^\ell -1)(2^{2m-1} + 2^{m-1}))$.
Clearly, $\overline{\bD}$
is the complementary design of $\bD$, that is, every block of $\overline{\bD}$
is the complement of some block of $\bD$.

Let $M$ be the $2^{2m+1+\ell} \times 2^{2m}$ $(0,1)$-matrix having as rows the codewords
of $\C$. Since $d^{\perp} =4$, $M$ is an orthogonal array of strength 3,
that is, for every integer $i$, $1 \le i \le 3$, and for every set of $i$ distinct columns
of $M$, every binary vector with $i$ components appears exactly
$2^{2m+1+\ell - i}$ times among the rows of the $2^{2m+1+\ell} \times i$ submatrix of $M$
formed by the chosen $i$ columns.
In particular, any $2^{2m+1+\ell} \times  2$ submatrix consisting of two distinct
columns of $M$ contains the binary vector $(1,1)$ exactly $2^{2m+\ell -1}$ times
as a row. Among these $2^{2m+\ell -1}$ rows, one corresponds to the all-one codeword
of $\C$, $2^{2m-1}-1$ rows correspond to
codewords of weight $2^{2m-1}$ (by equation (\ref{eq2m})), and the
remaining
\begin{equation}\label{eqw3}
2^{2m +\ell -1} - 1 - (2^{2m-1}-1)=(2^{\ell} -1)2^{2m-1}
\end{equation}
rows are  labeled by codewords of weight $2^{2m-1} \pm 2^{m-1}$,
corresponding to blocks of $\bD$ and $\overline{\bD}$.

Let now $1\le c_1 < c_2 \le 2^{2m}$ be two distinct columns of $M$.
These two columns label two distinct points of $\bD$ (resp. $\overline{\bD}$).
Let $\lambda$ denote the number of blocks of $\bD$ that are incident with $c_1$
and $c_2$. Then the pair $\{ c_1, c_2 \}$ is incident with
\begin{equation}\label{eqw4}
(2^\ell -1)2^{2m} -2(2^\ell -1)(2^{2m-1}- 2^{m-1}) + \lambda =(2^\ell -1)2^m + \lambda 
\end{equation}
blocks of the complementary design $\overline{\bD}$. 
It follows from (\ref{eqw4})
and (\ref{eqw3}) that
\[ (2^\ell -1)2^m + 2\lambda =(2^{\ell} -1)2^{2m-1},  \]
whence
\[ \lambda = (2^\ell -1)(2^{2m-2} - 2^{m-1}), \]
and the statements  (a) and (b) of the theorem follow. 
\end{proof}

The special case $\ell =1$ in Theorem \ref{main}
implies as a corollary the following result 
of  Dillon and Schatz \cite{DS87}.

\begin{theorem}\label{thm-DSthm}
Let $f(x)$ be a bent function from $\gf(2^{2m})$ to $\gf(2)$. Then the 
code $\C(f)$ has parameters $[2^{2m}, 2m+2, 2^{2m-1} - 2^{m-1}]$ and weight 
enumerator (\ref{sdpcode}). 
The minimum weight codewords form a symmetric SDP design with  parameters 
 (\ref{eqn-SDPparameters}). 
\end{theorem}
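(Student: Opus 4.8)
The plan is to derive Theorem~\ref{thm-DSthm} from Theorem~\ref{main} essentially by specialization, filling in the few details that are specific to the scalar case $\ell=1$. First I would observe that a single bent function $f\colon\gf(2^{2m})\to\gf(2)$ is trivially a $(2m,1)$ bent vectorial function (the only nonzero $(a_1)\in\gf(2)$ gives back $f$ itself), so Theorem~\ref{thm-bentvectf} applies directly: the code $\C(f)$ has weight enumerator \eqref{eqn-wtenumerator111} with $\ell=1$, which is exactly \eqref{sdpcode} after substituting $2^\ell-1=1$ and $2(2^{2m}-1)=2^{2m+1}-2$; in particular $\dim\C(f)=2m+2$ and the minimum distance is $2^{2m-1}-2^{m-1}$, giving the stated parameters $[2^{2m},2m+2,2^{2m-1}-2^{m-1}]$.

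Next I would invoke Theorem~\ref{main}(a) with $\ell=1$: the minimum-weight codewords support a $2$-design $\bD$ with parameters $2\text{-}(2^{2m},\,2^{2m-1}-2^{m-1},\,2^{2m-2}-2^{m-1})$, which are precisely the parameters \eqref{eqn-SDPparameters}. The block count is the number of minimum-weight codewords, namely $(2^1-1)2^{2m}=2^{2m}$, so $b=v=2^{2m}$ and the design is symmetric; this matches the weight enumerator \eqref{sdpcode}, whose coefficient of $z^{2^{2m-1}-2^{m-1}}$ is $2^{2m}$.

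The one genuinely new thing to check is the \emph{symmetric difference property}: that the symmetric difference of any three blocks is a block or the complement of a block. This follows from the linearity of $\C(f)$: each block is the support $S_g$ of a bent function $g=f_{(u,v,h)}$ of the special form appearing in the proof of Theorem~\ref{thm-bentvectf} with $u=1$, i.e.\ $g=f+L+h$ for a linear form $L$ and $h\in\gf(2)$. The symmetric difference of the supports of three such codewords $c_1,c_2,c_3$ is the support of $c_1+c_2+c_3$, which again has $u$-component $1+1+1=1$, hence is again a bent codeword of $\C(f)$ of weight $2^{2m-1}\pm2^{m-1}$ — that is, a block of $\bD$ or (using that $\C(f)$ is self-complementary, since $\bone\in\RM_2(1,2m)$) the complement of a block. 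Thus $\bD$ is a symmetric SDP design.

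I do not anticipate a serious obstacle here: the result is a corollary, and the only point requiring a short argument beyond quoting Theorem~\ref{main} and Theorem~\ref{thm-bentvectf} is the symmetric difference computation, which reduces to the trivial observation that the $\gf(2)$-sum of three vectors each having $u$-component $1$ again has $u$-component $1$. If anything needs care, it is making sure the parameter arithmetic in passing from \eqref{eqn-wtenumerator111} to \eqref{sdpcode} is displayed correctly and that the characterization is stated as an "if and only if" in line with \cite{DS87} (every symmetric SDP design with parameters \eqref{eqn-SDPparameters} arises this way), but that converse direction is already contained in the Dillon--Schatz characterization recalled in Section~\ref{sec-Introd} and in Theorem~\ref{thm-bentvectf}.
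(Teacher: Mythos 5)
Your proposal is correct and follows essentially the same route as the paper: specialize the weight enumerator of Theorem \ref{thm-bentvectf} and the design of Theorem \ref{main} to $\ell=1$, note that $b=2^{2m}=v$ forces symmetry, and then verify the symmetric difference property from linearity. The only cosmetic difference is in the SDP step: the paper first observes that $c_1+c_2$ lies in $\RM_2(1,2m)$ (via the block intersection number) and then adds $c_3$ to get a bent codeword, whereas you argue directly that the sum of three codewords each with $u$-component $1$ again has $u$-component $1$; the two arguments are equivalent.
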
 

\begin{proof}
The weight enumerator (\ref{sdpcode}) is obtained by substitution $\ell =1$ in
(\ref{eqn-wtenumerator111}). Since the number of minimum weight vectors
is equal to the code length $2^{2m}$, the 2-design   $\bD$ supported
 by the codewords of minimum weight is symmetric. Since every two blocks
 $B_1, B_2$ of $\bD$
 intersect in $\lambda = 2^{2m-2} - 2^{m-1}$ points, 
 the sum of the two codewords
 supporting $B_1$, $B_2$ is a codeword $c_{1,2} $ of weight $2^{2m-1}$ that belongs to
 the subcode $\RM_2(1,2m)$.  
 
 Let $B_3$ be a block distinct from $B_1$ and $B_2$, and
let $c_3$ be the codeword associated with  $B_3$.
Since $c_3$ is the truth table of a bent function, the sum
$c_{1,2}+c_3$ is a codeword of weight $2^{2m-1} \pm 2^{m-1}$,
thus its support is either a block or the complement of a block of $\bD$.
Therefore, $\bD$ is an SDP design.
 
\end{proof} 

\begin{theorem}\label{minw}
The code   
 $\C = \C(f_1, \cdots, f_\ell)$ from Theorem \ref{main}
is spanned by the set of codewords of minimum weight.
\end{theorem}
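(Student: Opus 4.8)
The plan is to show that the minimum-weight codewords of $\C = \C(f_1,\ldots,f_\ell)$ span a subcode $\C'$ that already contains $\RM_2(1,2m)$ and all the codewords of weight $2^{2m-1}\pm 2^{m-1}$, hence $\C' = \C$ by the weight enumerator \eqref{eqn-wtenumerator111}. First I would recall from the proof of Theorem~\ref{thm-bentvectf} that every codeword of weight $2^{2m-1}-2^{m-1}$ is the truth table of a bent function $f_{(u,v,h)}(x)=\sum_i u_i f_i(x)+\sum_j v_j\Tr_{2m/1}(w^j x)+h$ with $(u_1,\ldots,u_\ell)\ne(0,\ldots,0)$, and that for fixed nonzero $u$ the choices of $(v,h)$ give exactly the $2^{2m+1}$ translates (by vectors of $\RM_2(1,2m)$) of one fixed such truth table. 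Since $\RM_2(1,2m)$ is self-complementary, among these $2^{2m+1}$ translates exactly $2^{2m}$ have weight $2^{2m-1}-2^{m-1}$ and $2^{2m}$ have weight $2^{2m-1}+2^{m-1}$.

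The key step is to argue that the span of the minimum-weight codewords contains all of $\RM_2(1,2m)$. For a fixed nonzero $u$, pick a minimum-weight codeword $c$ corresponding to $f_{(u,v,h)}$. Adding the all-one vector (which I must first produce) flips it to weight $2^{2m-1}+2^{m-1}$; but more usefully, the difference of two minimum-weight codewords $c,c'$ with the \emph{same} $u$ but different $(v,h)$ lies in $\RM_2(1,2m)$ and equals $\sum_j (v_j-v'_j)\Tr_{2m/1}(w^j x)+(h-h')$, i.e. an arbitrary element of $\RM_2(1,2m)$. So I need at least two minimum-weight codewords sharing the same $u$; since for each nonzero $u$ there are $2^{2m}\ge 2$ of them (as $m\ge 2$, actually $m\ge 1$ suffices here), such pairs exist, and their differences sweep out all of $\RM_2(1,2m)$. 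In particular the all-one vector is obtained, so every weight-$(2^{2m-1}+2^{m-1})$ codeword, being the all-one vector plus a minimum-weight codeword, also lies in the span. Together with $\RM_2(1,2m)$ and the minimum-weight codewords themselves, this exhausts the support of the weight enumerator \eqref{eqn-wtenumerator111}, so the span is all of $\C$.

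The main obstacle is purely bookkeeping: making sure that the differences of pairs of minimum-weight codewords really do generate \emph{all} of $\RM_2(1,2m)$ and not merely a proper subcode. The point to check carefully is that, holding $u\ne 0$ fixed, as $(v,h)$ ranges over $\gf(2)^{2m}\times\gf(2)$ the codewords $f_{(u,v,h)}$ that happen to have the \emph{minimum} weight $2^{2m-1}-2^{m-1}$ (rather than $2^{2m-1}+2^{m-1}$) still have their pairwise differences spanning $\RM_2(1,2m)$; equivalently, the set $\{(v,h): \wt(f_{(u,v,h)})=2^{2m-1}-2^{m-1}\}$ is not contained in a coset of a proper subgroup of $\gf(2)^{2m+1}$. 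This follows because that set is exactly the complement, inside the full translate class, of the analogous set for weight $2^{2m-1}+2^{m-1}$, and both have size $2^{2m}$; a coset-counting argument (or: the weight function $(v,h)\mapsto\wt(f_{(u,v,h)})$ is an affine-invariant bent-ness datum, so the "minimum" and "maximum" fibers are related by adding the all-one vector and together generate the whole class) rules out degeneracy. Once this is in hand the theorem is immediate.
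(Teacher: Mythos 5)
Your reduction is sound: it suffices to show that $\RM_2(1,2m)$ lies in the span of the minimum-weight codewords, since each coset $F_u+\RM_2(1,2m)$ with $u\neq 0$ contains minimum-weight words, and adding $\RM_2(1,2m)$ to these recovers all of $\C$. The gap is in the step you yourself flag as the one to check. Fix $u\neq 0$ and let $S\subseteq\gf(2)^{2m}\times\gf(2)$ be the set of $(v,h)$ for which $f_{(u,v,h)}$ has weight $2^{2m-1}-2^{m-1}$; you need $S$ not to be contained in a coset of a proper subgroup, and you justify this by observing that $S$ and its complement both have size $2^{2m}$. That count does \emph{not} rule out the degenerate case: a coset of an index-$2$ subgroup of $\gf(2)^{2m+1}$ has size exactly $2^{2m}$, so $S$ could a priori be precisely such a coset, in which case the differences of minimum-weight words sharing this $u$ would span only a hyperplane of $\RM_2(1,2m)$. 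What actually saves the argument is a computation you have not done: writing $f_u=\sum_i u_if_i$ and letting $w_v$ be the field element with $\sum_j v_j\Tr_{2m/1}(w^jx)=\Tr_{2m/1}(w_vx)$, one has $\wt(f_{(u,v,h)})=2^{2m-1}-\tfrac12(-1)^h\hat{f_u}(w_v)$, so $S=\{(v,\tilde f_u(v)):v\in\gf(2)^{2m}\}$ is the graph of the dual bent function $\tilde f_u$ defined by $\hat{f_u}(w_v)=(-1)^{\tilde f_u(v)}2^m$. A graph $\{(v,g(v))\}$ is a coset of an index-$2$ subgroup exactly when $g$ is affine, and $\tilde f_u$ is bent, hence not affine (affine functions have Walsh values $0$ and $\pm2^{2m}$, never $\pm2^m$). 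With that supplied your argument closes; without it, the crucial non-degeneracy is asserted rather than proved.

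For comparison, the paper sidesteps this computation: by the Dillon--Schatz theorem the minimum-weight words of the subcode $\C(f_1)$ form a symmetric SDP design, whose incidence matrix is known to have $2$-rank $2m+2$ (Jungnickel--Tonchev), so those $2^{2m}$ words already span a $(2m+2)$-dimensional code containing $\RM_2(1,2m)$. That cited rank fact is doing exactly the work your coset-counting step is missing; either route is fine, but yours needs the dual-bent-function argument (or an equivalent one) spelled out.
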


\begin{proof}
All we need to prove is that the
copy of $\RM_{2}(1,2m)$ which is a subcode of  $\C$,
is spanned by some minimum weight codewords of $\C$.

It is known that the 2-rank 
(that is, the rank over $\gf(2)$) of the incidence matrix
of any symmetric SDP design $\bD$ with $2^{2m}$ points is equal to
$2m+2$ (for a proof, see \cite{JT92}). This implies that
the binary code spanned by $\bD$ contains the first order
Reed-Muller code $\RM_{2}(1,2m)$.
Consequently 
the minimum weight vectors of the 
subcode $\C_{f_1} = \C(f_1)$ of $\C=\C(f_1,\ldots, f_{\ell})$ span the subcode of $\C$ being
equivalent to $\RM(1,2m)$.
\end{proof}

\begin{corollary}\label{thm-equivcodedesign}
Two codes $\C_f =\C(f_1, \cdots, f_s)$,  $\C_g =\C(g_1, \cdots, g_s)$
 obtained from bent vectorial 
functions $F(f_1, \cdots, f_s)$,  $F(g_1, \cdots, g_s)$
 are equivalent if and only if the  designs
supported by their minimum weight vectors  are isomorphic.
\end{corollary}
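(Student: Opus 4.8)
\textbf{Proof proposal for Corollary \ref{thm-equivcodedesign}.}

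The plan is to prove the two implications separately, using Theorem \ref{minw} as the bridge between a code and the design supported by its minimum-weight codewords. For the ``if'' direction, suppose the designs $\bD_f$ and $\bD_g$ supported by the minimum-weight vectors of $\C_f$ and $\C_g$ are isomorphic via a point bijection $\sigma$ of $\gf(2^{2m})$ (viewed as a coordinate permutation of length $2^{2m}$). Since $\sigma$ carries each block of $\bD_f$ to a block of $\bD_g$, and the minimum-weight codewords are precisely the $(0,1)$-incidence vectors of the blocks (the code is binary, so each block has a unique incidence vector, and by Theorem \ref{main}(a) these are exactly the weight-$(2^{2m-1}-2^{m-1})$ codewords), the permutation $\sigma$ maps the set of minimum-weight codewords of $\C_f$ bijectively onto that of $\C_g$. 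By Theorem \ref{minw}, these codewords span $\C_f$ and $\C_g$ respectively, so $\sigma$ sends a spanning set of $\C_f$ to a spanning set of $\C_g$, hence $\sigma(\C_f) = \C_g$; thus the codes are equivalent.

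For the ``only if'' direction, suppose $\C_f$ and $\C_g$ are equivalent via a coordinate permutation $\sigma$, so $\sigma(\C_f)=\C_g$. A coordinate permutation preserves Hamming weight, so $\sigma$ maps the set of minimum-weight codewords of $\C_f$ onto the set of minimum-weight codewords of $\C_g$. Translating back to supports, $\sigma$ carries the block set of $\bD_f$ onto the block set of $\bD_g$ while acting as a bijection on the common point set $\gf(2^{2m})$; this is exactly an isomorphism of designs, so $\bD_f \cong \bD_g$.

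The only point that needs a little care — and the step I expect to be the main (minor) obstacle — is the passage between ``set of minimum-weight codewords'' and ``block set of the design'' in a weight-preserving, permutation-compatible way. One must observe that for a \emph{binary} code there is a canonical bijection between codewords of a fixed weight $w$ and their supports (a $w$-subset determines the $(0,1)$-vector uniquely), that by Theorem \ref{main}(a) the minimum weight of $\C$ equals $2^{2m-1}-2^{m-1}$ and every codeword of that weight is a block of $\bD$ (so no spurious minimum-weight codewords are left out), and that a coordinate permutation acts on supports exactly as the corresponding point permutation acts on blocks. With these remarks in place both implications are immediate, and the essential content is really just Theorem \ref{minw}: without the fact that the minimum-weight codewords span the code, equivalence of codes would not be recoverable from isomorphism of the supported designs.
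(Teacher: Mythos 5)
Your proposal is correct and follows exactly the route the paper intends: the corollary is stated as an immediate consequence of Theorem \ref{minw}, with the ``only if'' direction being automatic (coordinate permutations preserve weight, hence carry blocks to blocks) and the ``if'' direction resting precisely on the fact that the minimum-weight codewords span the code. Your added remarks on the canonical bijection between binary codewords of fixed weight and their supports are sound and just make explicit what the paper leaves implicit.
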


\begin{example} 
\label{ex15}
Let $m=5$. Let $w$ be a generator of $\gf(2^{10})^*$ with $w^{10} + w^6 + w^5 + 
w^3 + w^2 + w + 1=0$. Let $\beta=w^{2^5+1}$. Then $\beta$ is a generator
of $\gf(2^5)^*$. 
Define $\beta_j=\beta^j$ for $1 \leq j \leq 5$. Then $\{\beta_1, \beta_2, \beta_3, 
\beta_4, \beta_5\}$ is a basis of $\gf(2^5)$ over $\gf(2)$. Now consider the bent 
vectorial function $(f_1, f_2, f_3, f_4, f_5)$ in Example \ref{exam-bentvectfunc1} 
and the code $\C(f_1, f_2, f_3)$. 

When $i=1$ and $i=7$, the two codes $\C(f_1, f_2, f_3)$ have parameters $[1024, 14, 496]$ 
and weight enumerator 
$$ 
1 + 7168z^{496} + 2046z^{512} + 7168z^{528} +z^{1024}. 
$$ 
The two codes are not equivalent according to Magma. 
It follows from Corollary \ref{thm-equivcodedesign} that the two designs with 
parameters $2$-$(1024, 496, 1680)$  supported by these codes
are not isomorphic. 
\end{example} 

\begin{note}
\label{note-conj}
Examples \ref{bch6410} and \ref{example9} give 
three inequivalent $[64,10,28]$ codes,
and Example \ref{ex15}
lists two inequivalent codes with parameters $[1024, 14, 496]$, 
 obtained from bent vectorial functions. 
As we pointed out in Note \ref{note10}, the code 
$\C_1$ from Example \ref{example9}, does not have a 2-transitive group.
\end{note}

These examples, as well as further evidence provided
by Theorem \ref{thnew} below, suggest the following plausible statement  
that we formulate as a conjecture.

\begin{conj}
\label{conjecture}
For any given $\ell$ in the range $1\le \ell \le m$,
the number of inequivalent codes
with parameters $[2^{2m}, 2m+1 +\ell, 2^{2m-1}-2^{m-1}]$
 obtained from $(2m,\ell)$
 bent vectorial functions via Theorem \ref{thm-bentvectf}, 
grows exponentially with linear growth of $m$,
and most
 of these codes
do not admit a 2-transitive automorphism group. 
\end{conj}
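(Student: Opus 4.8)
The plan is to establish the two assertions separately, since the exponential-growth claim is purely combinatorial while the ``most codes are not $2$-transitive'' claim is group-theoretic. For the first assertion, by Corollary~\ref{thm-equivcodedesign} it suffices to show that the number of pairwise non-isomorphic $2$-designs with parameters \eqref{par} supported by the minimum-weight codewords grows exponentially in $m$. The natural route is to exploit the freedom in choosing a bent vectorial function: one counts, up to EA-equivalence, the $(2m,\ell)$-bent vectorial functions available, and then bounds from above how many distinct functions can yield the same code (equivalently, the same minimum-weight design). For $\ell=1$ this is already known --- the quasi-symmetric and symmetric SDP designs of \cite{JT91,JT92,Tonchev93} give an exponentially growing family --- so the task for general fixed $\ell$ is to ``lift'' that growth, for instance by forming vectorial functions whose component span contains a bent function of SDP type and arguing that inequivalent choices of the complementary components produce inequivalent codes. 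I would first do the case $\ell=1$ rigorously (citing the SDP literature), then bootstrap: given exponentially many inequivalent single-bent codes $\C(g)$, attach an $(\ell-1)$-dimensional complement in enough genuinely different ways that the resulting $\C(g,h_1,\dots,h_{\ell-1})$ remain inequivalent, using that the subcode spanned by minimum-weight vectors determines a canonical copy of $\RM_2(1,2m)$ (Theorem~\ref{minw}) and hence the embedded single-bent subcodes are invariants of the equivalence class.

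\textbf{Second assertion.} For the statement that most of these codes lack a $2$-transitive automorphism group, the cleanest approach is an orbit-counting (averaging) argument. Let $N_m(\ell)$ be the number of inequivalent codes of the given parameters coming from bent vectorial functions, and let $T_m(\ell)$ be the number of those admitting a $2$-transitive group. The $2$-transitive groups of degree $2^{2m}$ are classified (they are subgroups of $\mathrm{A\Gamma L}(1,2^{2m})$, of $\mathrm{A\Gamma L}(d,2)$ for $d\mid 2m$, or one of finitely many sporadic/exceptional families --- Huppert, Hering, and the CFSG-based classification); in particular their orders are bounded by a polynomial of fixed degree in $2^{2m}$ times a bounded number of families. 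A code invariant under such a group $H$ is a union of $H$-orbits on the $2^{2m+1+\ell}$ codewords, so the number of $H$-invariant codes of the right dimension is at most $2^{(\text{number of }H\text{-orbits})}$, and since $|H|$ grows only polynomially, $H$ has at least (roughly) $2^{2m+1+\ell}/\mathrm{poly}(2^{2m})$ orbits --- which is still exponential, so this crude bound alone is not enough. The fix is to count instead \emph{pairs} (code, $2$-transitive group), using that a $2$-transitive code must contain $\RM_2(1,2m)$ as its $2$-transitive hull-module and is therefore sandwiched between $\RM_2(1,2m)$ and $\RM_2(r,2m)$ for small $r$; the number of $H$-submodules of $\RM_2(2m-1,2m)/\RM_2(1,2m)$ for $H\le \mathrm{A\Gamma L}(2m,2)$ is governed by $\mathrm{GL}(2m,2)$-submodule theory of the Reed--Muller modules and grows only polynomially in $2^m$ for each fixed $\ell$. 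Summing over the boundedly many families of $2$-transitive groups yields $T_m(\ell)=2^{o(\text{exponential})}$, hence $T_m(\ell)/N_m(\ell)\to 0$.

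\textbf{Key steps in order.} (1) Reduce the equivalence count to an isomorphism count of minimum-weight designs via Corollary~\ref{thm-equivcodedesign}. (2) Settle $\ell=1$ by invoking the exponential families of symmetric SDP designs on $2^{2m}$ points. (3) Prove a ``lifting lemma'': inequivalent single-bent codes extend to inequivalent bent-vectorial codes, using Theorem~\ref{minw} to recover the canonical $\RM_2(1,2m)$ and the list of embedded single-bent subcodes as equivalence invariants; this gives $N_m(\ell)$ exponential. (4) Bound $T_m(\ell)$: enumerate the finitely many families of $2$-transitive groups of degree $2^{2m}$, and for each bound the number of invariant codes of the given parameters by a submodule count inside the Reed--Muller module, obtaining a subexponential bound. (5) Conclude $T_m(\ell)/N_m(\ell)\to 0$.

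\textbf{Main obstacle.} The hard part will be step~(3), the lifting lemma --- precisely, showing that genuinely distinct extensions stay inequivalent rather than collapsing under some unexpected coordinate permutation, since a priori an equivalence of the big codes need not respect the chosen single-bent subcode. I expect this to require using that the minimum-weight supports generate $\C$ (Theorem~\ref{minw}) together with a rigidity statement for the embedded SDP designs (the Tonchev embedding theorem \cite{Tonchev93}, which pins down the symmetric SDP design from its derived/residual pieces), so that the multiset of single-bent subcodes --- equivalently, the $2$-design --- is a true invariant. A secondary difficulty is making step~(4) fully rigorous: one must control, uniformly in the (bounded number of) $2$-transitive group families, the submodule lattice of the relevant Reed--Muller quotient module; this is classical for $\mathrm{GL}$ and $\mathrm{A\Gamma L}$ but the sporadic $2$-transitive groups (e.g. symplectic-type groups of even degree) need to be checked individually, which is finite work but not automatic.
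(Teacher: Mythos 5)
The statement you are attacking is stated in the paper as Conjecture \ref{conjecture}; the paper does not prove it in general, but only settles the case $\ell=1$ (Theorem \ref{thnew}), and its concluding remarks explicitly list the exponential growth of the number of nonisomorphic designs with parameters (\ref{pm}) for fixed $\ell>1$ as an open problem. Your proposal is a research plan rather than a proof, and it is incomplete precisely where the problem is genuinely open. Your step (3), the ``lifting lemma,'' is the heart of the matter: to get an exponential lower bound on the number of inequivalent codes for $\ell\ge 2$ you must show that inequivalent single-bent codes $\C(g)$ extend to inequivalent codes $\C(g,h_1,\dots,h_{\ell-1})$, which requires both that enough genuinely different extensions exist and that the collection of embedded single-bent subcodes (equivalently, the embedded symmetric SDP designs) is an invariant of the equivalence class of the big code. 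You give no argument for either point and you yourself flag this as the main obstacle; Theorem \ref{minw} only tells you that the minimum-weight vectors span $\C$ and recover $\RM_2(1,2m)$, not that an equivalence of two codes $\C(g,h_1,\dots)$ and $\C(g',h_1',\dots)$ must match up their single-bent subcodes. So the first assertion is not established for any $\ell\ge 2$.

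For the second assertion your averaging argument is also not a proof: you correctly observe that the naive orbit-counting bound fails, and the proposed repair (bounding the number of invariant codes by submodule counts in Reed--Muller quotient modules, uniformly over all families of $2$-transitive groups of degree $2^{2m}$, including the sporadic ones) is only sketched. For comparison, the paper's treatment of the only case it does prove ($\ell=1$, Theorem \ref{thnew}(ii)) is far more direct and avoids the permutation-group classification entirely: by Theorem \ref{minw} the automorphism group of $\C(f)$ coincides with that of the symmetric SDP design supported by the minimum-weight codewords, and Kantor's classification of $2$-transitive symmetric designs \cite{K85-2} says there is exactly one design with parameters (\ref{eqn-SDPparameters}) admitting a $2$-transitive group; combined with the exponential count of SDP designs from \cite{Kantor83} this yields both halves of the conjecture for $\ell=1$. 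Your step (2) coincides with that argument for the growth claim, but a realistic route to the $2$-transitivity claim for general $\ell$ would likewise go through the designs --- e.g.\ via Corollary \ref{thm-equivcodedesign}, the intersection-number structure of Theorem \ref{thm-int}, and the linked-system description in \cite{BK} --- rather than through a global count over the classification of $2$-transitive groups.
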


As it is customary, by ``most'' we mean that the limit of the ratio of the 
number of 2-transitive codes divided by the total number of 
codes approaches zero when $m$ grows to infinity.

The next theorem proves Conjecture \ref{conjecture} in the case $\ell = 1$.

\begin{theorem}
\label{thnew}
(i) The number of inequivalent $[2^{2m},2m+2,2^{2m-1}-2^{m-1}]$
codes obtained from single bent functions 
from $GF(2^{2m})$ to $GF(2)$ 
grows exponentially with linear growth of $m$.

(ii) For every given $m\ge 2$, there is exactly one (up to equivalence)
code with parameters  $[2^{2m},2m+2,2^{2m-1}-2^{m-1}]$ obtained
from a bent function from $GF(2^{2m})$
to $GF(2)$, that admits a 2-transitive automorphism group.
\end{theorem}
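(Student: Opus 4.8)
\textbf{Proof proposal for Theorem \ref{thnew}.}

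The plan is to translate both statements into the language of the designs supported by minimum-weight codewords, using Corollary \ref{thm-equivcodedesign}: two codes $\C(f)$ and $\C(g)$ are equivalent if and only if the symmetric SDP designs with parameters \eqref{eqn-SDPparameters} supported by their minimum weight vectors are isomorphic. So for (i) it suffices to exhibit exponentially many pairwise non-isomorphic symmetric SDP designs on $2^{2m}$ points arising from bent functions, and for (ii) it suffices to show that exactly one isomorphism class of such designs has a $2$-transitive automorphism group. For (i) I would invoke the known classification/enumeration results on bent functions and on symmetric SDP designs: the number of inequivalent symmetric SDP designs with parameters \eqref{eqn-SDPparameters} is already known to grow exponentially in $m$ (this is in the Kantor and Jungnickel--Tonchev line of work, e.g. \cite{Kantor75, JT91, JT92}), and each such design comes from a bent function (it is supported by the minimum-weight words of a code of the form \eqref{sdpcode}, which by Theorem \ref{thm-bentvectf} with $\ell=1$ is exactly a $\C(f)$). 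Combining with Corollary \ref{thm-equivcodedesign} gives exponentially many inequivalent codes $\C(f)$, proving (i).

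For (ii), the natural candidate for the unique $2$-transitive code is the one coming from a \emph{quadratic} bent function, i.e. $f$ of Maiorana--McFarland / "symplectic" type, whose truth table together with $\RM_2(1,2m)$ generates a subcode of $\RM_2(2,2m)$; equivalently, the symmetric SDP design is the classical design of "affine" type whose automorphism group contains $\AG(2m,2)$ acting $2$-transitively (indeed $3$-transitively on points in the appropriate representation). Existence of at least one $2$-transitive example is thus immediate. The work is in uniqueness. Here I would argue as follows: suppose $\C(f)$ has a $2$-transitive automorphism group $\Gamma$ on its $2^{2m}$ coordinates. Since every coordinate permutation fixing $\C(f)$ must fix its minimum-weight design $\bD$, $\Gamma$ embeds in $\Aut(\bD)$, and also $\Gamma$ permutes the subcode generated by the minimum weight words, which by Theorem \ref{minw} is all of $\C(f)$, and in particular $\Gamma$ fixes the unique subcode $\RM_2(1,2m)$ (it is the unique $[2^{2m},2m+1]$ subcode of that weight, by the uniqueness statement for $\RM_2(1,2m)$ quoted in Section \ref{Section4}). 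So $\Gamma \le \Aut(\RM_2(1,2m)) = \AGL(2m,2)$. Now classify which bent functions $f$ have a $2$-transitive subgroup of $\AGL(2m,2)$ stabilizing $S_f$ modulo the coset space $\C(f)/\RM_2(1,2m)$: a $2$-transitive subgroup of $\AGL(2m,2)$ of degree $2^{2m}$ is, by the classification of $2$-transitive affine groups (Hering's theorem), very restricted, and in the present even-characteristic setting with a $\Gamma$-invariant bent coset one is forced (after an affine change of coordinates) into the quadratic case, giving a single isomorphism class. This last reduction is the main obstacle.

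The hard part, concretely, is the uniqueness argument in (ii): showing that $2$-transitivity of $\Gamma \le \AGL(2m,2)$ together with invariance of the coset $S_f + \RM_2(1,2m)$ forces $f$ to be EA-equivalent to a quadratic bent function. I would handle this by letting $\Gamma_0$ be the translation subgroup (nontrivial, since a $2$-transitive affine group of degree $2^{2m}$ either is contained in $\AGL(1,2^{2m})$ or contains the full translation group, and in the latter "affine type" case one gets all translations); translation-invariance of the coset $S_f + \RM_2(1,2m)$ means that for every $a$ the function $f(x+a)+f(x)$ is affine, i.e. all second-order derivatives of $f$ vanish, which is precisely the statement that $f$ is quadratic (degree $\le 2$) — and a quadratic bent function on $\gf(2^{2m})$ is unique up to EA-equivalence. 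In the remaining "almost simple / $\AGL(1)$-type" possibilities for a $2$-transitive group of degree $2^{2m}$ one checks that no invariant bent coset of the right weight survives (for instance a Singer-type cyclic group of order $2^{2m}-1$ cannot stabilize a difference set of the Menon--Hadamard size for parameter reasons, matching the observations in Notes \ref{note10} and \ref{note-conj}), which rules them out. Assembling: the only bent functions giving a $2$-transitive $\C(f)$ are the quadratic ones, these all give isomorphic SDP designs, hence by Corollary \ref{thm-equivcodedesign} a single code, proving (ii).
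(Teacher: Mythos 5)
For part (i) your argument coincides with the paper's: Dillon--Schatz plus Theorem \ref{minw}/Corollary \ref{thm-equivcodedesign} reduces equivalence of the codes $\C(f)$ to isomorphism of the supported SDP designs, and the exponential growth is then quoted from Kantor. (The precise reference the paper uses is \cite{Kantor83}, not \cite{Kantor75} or \cite{JT91}; \cite{JT92} is about the quasi-symmetric derived/residual designs. Minor point.)

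For part (ii) you have taken a genuinely different, and incomplete, route. The paper's proof is two lines: by Theorem \ref{minw} the automorphism group of $\C(f)$ \emph{equals} that of the design $\bD(f)$, and Kantor's classification of $2$-transitive symmetric designs \cite{K85-2} says there is exactly one symmetric design with parameters \eqref{eqn-SDPparameters} admitting a $2$-transitive group. You are instead attempting to reprove that classification in this special case, and the two steps you flag as ``the main obstacle'' are indeed where the gaps are. First, in the affine-type case you need the regular normal elementary abelian subgroup of $\Gamma$ to be the translation subgroup of the \emph{given} coordinatization of $\AGL(2m,2)$; a regular normal subgroup of $\Gamma$ need not a priori coincide with the standard translation group $T$ (you should instead argue via $T\cap\Gamma$, which is normal in $\Gamma$ and hence, in a $2$-transitive group, is either trivial or transitive and therefore all of $T$). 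Second, your dismissal of the almost-simple alternative via ``Singer-type cyclic groups'' is off target --- Singer groups are only transitive, not $2$-transitive, so they never arise in this dichotomy; what is actually needed is the observation that for $m\ge 2$ the only almost simple $2$-transitive groups of degree $2^{2m}$ are $A_{2^{2m}}$ and $S_{2^{2m}}$ (e.g.\ $2^{2m}-1=(2^m-1)(2^m+1)$ is never a prime power, killing the $\mathrm{PSL}(2,q)$ case), and these cannot preserve a code with weight enumerator \eqref{sdpcode}. Your affine-case computation --- translation-invariance of the coset $S_f+\RM_2(1,2m)$ forces all derivatives $f(x+a)+f(x)$ to be affine, hence $f$ quadratic, and quadratic bent functions form a single EA-class --- is correct and is the attractive part of your approach, but as written the proof of uniqueness is not complete; citing \cite{K85-2} as the paper does closes it immediately.
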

\begin{proof}
(i) By the Dillon-Schatz Theorem \ref{thm-DSthm}, the minimum weight codewords
of a code $\C(f)$ with parameters $[2^{2m},2m+2,2^{2m-1}-2^{m-1}]$ obtained
from a bent function $f$
 form a symmetric SDP design
 $\bD(f)$ with  parameters (\ref{eqn-SDPparameters}).
It follows from Theorem \ref{minw} that two codes $\C(f_1)$, $\C(f_2)$
obtained from bent functions $f_1$, $f_2$
are equivalent if and only if the the corresponding designs
$\bD(f_1)$, $\bD(f_1)$ are isomorphic.
Since the number of nonisomorphic SDP designs with  parameters 
(\ref{eqn-SDPparameters}) grows exponentially when $m$ grows 
to infinity (Kantor \cite{Kantor83}),
the proof of part (i) is complete.

(ii) It follows from Theorem \ref{minw} that the automorphism group
of a code $\C(f)$ obtained from a bent function $f$ coincides
with the automorphism group of the design $\bD(f)$ supported by
the codewords of minimum weight. The design $\bD(f)$ is a symmetric
2-design with parameters (\ref{eqn-SDPparameters}).
It was proved by Kantor \cite{K85-2} that for every $m\ge 2$,
there is exactly one (up to isomorphism) symmetric design
with parameters (\ref{eqn-SDPparameters}) that admits
 a 2-transitive automorphism group. This completes the proof of part (ii).

\end{proof}
 
By Theorem \ref{thm-DSthm}, the codes based on single bent functions
support symmetric 2-designs. The next theorem determines the block intersection
numbers of the design $\bD(f_1, \cdots, f_\ell)$ supported by the minimum weight vectors
in the code $\C(f_1, \cdots, f_\ell)$ from Theorem \ref{main}.

\begin{theorem}\label{thm-int}
Let   $\bD=\bD(f_1,\ldots,f_\ell)$, ($1\le \ell \le m$), be a 2-design
with parameters $$ 2-(2^{2m}, 2^{2m-1} - 2^{m-1}, (2^\ell -1)(2^{2m-2} - 2^{m-1})) $$
supported by the minimum weight codewords of a code $\C =\C(f_1,\ldots,f_\ell)$
defined as in Theorem \ref{main}.

(a) If $\ell =1$, $\bD$ is a symmetric SDP design, with block intersection number
$\lambda = 2^{2m-2} - 2^{m-1}$. 

(b) If  $2 \leq \ell \leq m$,
$\bD$  has the following three block intersection numbers:  
\begin{equation}\label{s123}
s_1 = 2^{2m -2} - 2^{m-2}, \ s_2 =  2^{2m -2} - 2^{m-1}, \ s_3 =
 2^{2m -2} - 3\cdot 2^{m-2}.
\end{equation}
For every block of $\bD$, these intersection numbers occur with multiplicities
\begin{equation}\label{n123}
n_1 =2^{m}(2^m +1)(2^{\ell-1} -1),  \  n_2 = 2^{2m} -1, \   n_3 = 2^{m}(2^m -1)(2^{\ell-1} -1).
\end{equation}
\end{theorem}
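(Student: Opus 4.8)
The plan is to compute block intersection numbers via the code structure, exploiting the fact established in Theorem~\ref{minw} that $\C$ is spanned by minimum-weight codewords and contains $\RM_2(1,2m)$. Fix a block $B_0$ of $\bD$, which is the support of a minimum-weight codeword $c_0$, i.e.\ the truth table of a bent function $g_0 = \sum u_i f_i$ for some nonzero $u$. Any other block $B$ is the support of a codeword $c$ which is the truth table of some bent function $g$ of the form $\sum u_i' f_i + L + b$ with $(u_1',\dots,u_\ell')$ nonzero. The key identity is that $|B_0 \cap B|$ is determined by $\wt(c_0)$, $\wt(c)$ and $\wt(c_0 + c_1)$ via $|B_0\cap B| = \tfrac12(\wt(c_0)+\wt(c)-\wt(c_0+c))$, and since both weights on the left are $2^{2m-1}-2^{m-1}$, the intersection number is $2^{2m-1}-2^{m-1} - \tfrac12\wt(c_0+c)$. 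So everything reduces to: \emph{what are the possible weights of a sum of two minimum-weight codewords, and with what multiplicities?}

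First I would handle case (a): when $\ell=1$ the design is symmetric (the number of blocks equals $2^{2m}$ = number of points), and by Theorem~\ref{thm-DSthm} it is the Dillon--Schatz SDP design, so every two blocks meet in exactly $\lambda = 2^{2m-2}-2^{m-1}$ points; this is immediate from the symmetric-design identity, or equivalently from the fact that $c_0+c$ always has weight $2^{2m-1}$ (lying in $\RM_2(1,2m)$) when $c_0,c$ are distinct truth tables of bent functions differing by a linear term. For case (b), I would split the sum $c_0+c$ according to whether the ``bent part'' $g_0+g$ is the zero function, a nonzero linear function, or a genuine bent function. Concretely: write $c$'s underlying function as $g = \sum u_i' f_i + L + b$. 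If $(u_i') = (u_i)$ then $c_0 + c$ is the truth table of $L + b$, a codeword of $\RM_2(1,2m)$, hence has weight $0$ (excluded, $c\ne c_0$) or $2^{2m-1}$ or $2^{2m}$ (the latter forces $B = \overline{B_0}$, impossible since blocks have size $<2^{2m}/2$ only when... actually size $=2^{2m-1}-2^{m-1} < 2^{2m-1}$, so $\overline{B_0}$ is not a block). If $(u_i')\ne (u_i)$ and nonzero, then $g_0+g = \sum(u_i+u_i')f_i + L' + b'$ is the truth table of a bent function (since $\sum(u_i+u_i')f_i$ is bent by the bent-vectorial property), so $c_0+c$ has weight $2^{2m-1}\pm 2^{m-1}$, giving $|B_0\cap B| = 2^{2m-1}-2^{m-1} - \tfrac12(2^{2m-1}\pm 2^{m-1}) = 2^{2m-2}-2^{m-1} \mp 2^{m-2}$, i.e.\ exactly $s_1$ or $s_3$ of \eqref{s123}; the weight $2^{2m-1}$ sub-case gives $s_2 = 2^{2m-2}-2^{m-1}$.

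For the multiplicities \eqref{n123}: $n_2 = 2^{2m}-1$ counts blocks $B\ne B_0$ with the same $(u_i')$ as $B_0$; these are exactly the truth tables of $g_0 + (\text{nonzero affine function})$ that happen to be bent — but in fact \emph{all} $2^{2m+1}$ affine shifts $g_0 + L + b$ are bent truth tables of $c_0$-weight, and distinct shifts give distinct codewords except the all-ones shift pairs them up... more precisely the orbit of $c_0$ under adding $\RM_2(1,2m)$ has size $2^{2m+1}$, of which one is $c_0$ and one is $\overline{c_0}$ (not a block), leaving $2^{2m}-1$ blocks — wait, that over-counts; I would pin this down by noting that the $2^{2m}$ minimum-weight codewords in the single-bent subcode $\C(g_0)$ consist of $c_0$ together with the $2^{2m}-1$ others, all at pairwise SDP-intersection, which is exactly the $n_2$ count. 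The counts $n_1, n_3$ require knowing, for each of the $2^\ell-2$ nonzero values of $(u_i+u_i')\ne 0$ with the resulting bent function, how many of the $2^{2m+1}$ affine shifts land at weight $2^{2m-1}-2^{m-1}$ versus $2^{2m-1}+2^{m-1}$; for a fixed bent function $h$, the number of $(L,b)$ with $h+L+b$ of weight $2^{2m-1}-2^{m-1}$ equals the number of Walsh coefficients $\hat h(w) = +2^m$ (ranging $w$ over all $2^{2m}$ values and $b$ over $\gf(2)$), which by the standard fact about bent functions is $2^{m-1}(2^m+1)$ for one sign and $2^{m-1}(2^m-1)$ for the other (the ``number of $w$ with $\hat h(w) = \pm 2^m$'' split). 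Multiplying by $(2^{\ell-1}-1)$ cosets-worth... I would organize the bookkeeping so that the $2^\ell-1$ nonzero $u'$ split as: $1$ equal to $u$ (giving $n_2$), and $2^\ell-2$ unequal, which pair up under $u' \leftrightarrow u+u'$... actually the cleanest route is to count pairs $(B_0, B)$ globally and use the known $\lambda$ from Theorem~\ref{main} plus the total block count to solve a small linear system for $n_1, n_3$ once $n_2$ and the three possible values $s_1, s_2, s_3$ are known, via $\sum n_i = b-1$ and $\sum n_i s_i = \lambda(b-1)$ where $b = (2^\ell-1)2^{2m}$ — two equations, two unknowns $n_1, n_3$.

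The main obstacle I anticipate is the multiplicity computation, specifically justifying the split $2^{m-1}(2^m+1)$ versus $2^{m-1}(2^m-1)$ for how many affine shifts of a fixed bent function have each of the two weights, and confirming that this split is the \emph{same} for every bent function $\sum (u_i+u_i')f_i$ arising here (it is, since it depends only on the sign distribution of Walsh coefficients, which is $(2^m\pm2^m)/2$-type and universal for bent functions in $2m$ variables) — together with the correct coset/pairing count that turns $2^\ell-2$ unequal values of $u'$ and $2^{2m+1}$ affine shifts each into the stated $n_1 = 2^m(2^m+1)(2^{\ell-1}-1)$ and $n_3 = 2^m(2^m-1)(2^{\ell-1}-1)$. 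Using the linear-system shortcut for $n_1, n_3$ (given $n_2 = 2^{2m}-1$, which follows cleanly from the SDP subdesign structure) sidesteps most of the delicate counting and is the route I would actually write up.
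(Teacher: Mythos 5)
Your derivation of the three intersection numbers $s_1,s_2,s_3$ is essentially the paper's: both arguments reduce $|B_0\cap B|$ to the weight of the sum of the two minimum-weight codewords, which (the sum being either a nonzero codeword of $\RM_2(1,2m)$ or the truth table of a bent function) can only be $2^{2m-1}$ or $2^{2m-1}\pm 2^{m-1}$, giving (\ref{s123}). For the multiplicities the paper solves the standard three block-intersection equations $\sum_i n_i=b-1$, $\sum_i s_in_i=k(r-1)$, $\sum_i s_i(s_i-1)n_i=k(k-1)(\lambda-1)$ in the three unknowns $n_1,n_2,n_3$ (uniquely solvable since the $s_i$ are distinct), whereas you compute $n_2=2^{2m}-1$ directly from the coset structure -- correctly, since the blocks meeting $B_0$ in $s_2$ points are exactly the other minimum-weight codewords of the Dillon--Schatz subcode generated by $g_0$ and $\RM_2(1,2m)$ -- and then solve a $2\times 2$ system for $n_1,n_3$. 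That variant is perfectly workable.

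The one genuine error is your second counting equation: you write $\sum_i n_is_i=\lambda(b-1)$, but the correct identity, obtained by counting incidences $(p,B)$ with $p\in B_0\cap B$ and $B\ne B_0$ point by point over $B_0$, is $\sum_i n_is_i=k(r-1)$, where $r=(2^\ell-1)(2^{2m-1}-2^{m-1})$ is the replication number. These are not equal: for $m=\ell=2$ one has $k(r-1)=6\cdot 17=102$ while $\lambda(b-1)=6\cdot 47=282$, so your system as written yields wrong values of $n_1$ and $n_3$. With the corrected right-hand side, your two equations together with $n_2=2^{2m}-1$ do reproduce (\ref{n123}) (e.g.\ $3\cdot 20+2\cdot 15+1\cdot 12=102$ in the case $m=\ell=2$). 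A secondary caution: the ``direct'' fallback via Walsh-coefficient sign counts is less automatic than you suggest, since whether a bent function $h$ has $2^{m-1}(2^m+1)$ or $2^{m-1}(2^m-1)$ Walsh values equal to $+2^m$ depends on the weight of its dual (the two cases swap between $h$ and $h+1$), and one would also need to control the joint distribution over the two bent functions $g'$ and $g_0+g'$; the linear-system route you propose to write up avoids all of this, so the only required repair is the identity above.
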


\begin{proof}
Case (a) follows from Theorem \ref{thm-DSthm}.

(b) Assume that $2 \leq \ell \leq m$.
Let $w_1$, $w_2$ be two distinct codewords
of  weight $2^{2m-1} - 2^{m-1}$.
The Hamming distance $d(w_1,w_2)$ between $w_1$ and $w_2$
is  equal to
\[ 2(2^{2m-1} - 2^{m-1}) - 2s, \]
where $s$ is the size of the intersection of the supports of $w_1$ and $w_2$.
Since the distance between $w_1$ and $w_2$ is
either $2^{2m-1} - 2^{m-1}$, or $2^{2m-1}$, or
$2^{2m-1} + 2^{m-1}$, the size $s$ of the intersection of
the two blocks of $\bD$ supported by $w_1$, $w_2$
can take only the values $s_i$, $1\le i \le 3$,
given by (\ref{s123}).

Let $B$ be a block of $\bD$ supported by a codeword of weight
$2^{2m-1} - 2^{m-1}$, and let $n_i$, ($1\le i \le 3$),
denote the number of blocks of $\bD$ that intersect $B$
in $s_i$ points. Let ${\bf r} = (2^\ell -1)(2^{2m-1} - 2^{m-1})$ 
denote the number of blocks of $\bD$ containing  a single point, 
and let $b = (2^\ell -1)2^{2m}$ denote the total number of blocks of $\bD$. 
Finally, let $k = 2^{2m-1} - 2^{m-1}$ denote the size of a block, 
and let $\lambda=(2^\ell -1)(2^{2m-2} - 2^{m-1})$ denote the number of blocks
containing two points.
We have
\begin{eqnarray*}
n_1  +  n_2  +  n_3 & = & b - 1,\\
s_{1}n_1  +  s_{2}n_2  +  s_{3}n_3 & = & k({\bf r} - 1), \\
s_{1}(s_{1} - 1)n_1  +  s_{2}(s_{2} -1)n_2 +  s_{3}(s_{3} -1)n_3 & = &
k(k-1)(\lambda -1).
\end{eqnarray*}
The second and the third equation count in two ways the appearances of single points
and ordered pairs of points of $B$ in other blocks of $\bD$.
The unique solution of this system of equations for $n_1,\  n_2, \ n_3$ 
 is given by (\ref{n123}).
\end{proof} 

\begin{note} 
A {\it bent set} is a set  $S$ of bent functions
such that the sum of every two functions from $S$ is 
also a bent function \cite{BK}. Since every $(2m,\ell)$ bent vectorial function
gives rise to a bent set consisting of $2^\ell$ functions \cite[Proposition~1]{BK},
it follows from \cite[Theorem~1]{BK} that the set of blocks of the design $\bD$
is a union of $2^\ell-1$ linked system of symmetric
2-$(2^{2m}, 2^{2m-1}-2^{m-1},2^{2m-2}-2^{m-1})$ designs.
This gives an alternative proof of Theorem~\ref{main} and
Theorem~\ref{thm-int}(b).
\end{note}

\begin{note}
For every integer $m \ge 2$, any  code  $\C(f_1, f_2, \ldots, f_m)$ 
based on a bent vectorial function $F(x)=(f_1(x), f_2(x), \cdots, f_m(x))$
from $\gf(2^{2m})$ to $\gf(2)^m$, 
contains
$2^{m}-1$ subcodes $\C'=\C'(f_{j_1},\ldots, f_{j_s})$,
$j_1 < \cdots < j_s \le m$, such that 
$$ 
\RM_2(1, 2m) \subset \C' \subseteq \C(f_1,  \ldots, f_m). 
$$ 
Each  subcode $\C'$  holds $2$-designs.  This may be the only known chain 
of linear codes, included in each other,  other than the chain of the
Reed-Muller codes,  
\[  \RM_2(1, 2m) \subset  \RM_2(2, 2m) \subset \cdots \subset  \RM_2(m-2, 2m). \]
such that all codes in the chain support nontrivial 2-designs. 
\end{note}

\begin{note}
We would demonstrate that the characterization of bent vectorial functions in Theorem 
\ref{thm-bentvectf} can be used to construct bent vectorial functions. To this end, 
consider the extended binary narrow-sense primitive BCH code of length $2^{2m}-1$ 
and designed distance $2^{2m-1}-1-2^{m-1}$, which is affine-invariant and holds 
$2$-designs \cite{DingZhou}. This code has the desired weight enumerator of (\ref{eqn-wtenumerator111}) for $\ell = m$ \cite{DingZhou}. It can be proved with the 
Delsarte theorem that the trace representation of this code is equivalent to the 
following code: 
\[\left\{\left(f_{a,b,h}(x)\right)_{x\in\gf(2^{2m})}:
  a \in \gf(2^m), \, b \in \gf(2^{2m}), \, h \in \gf(2)  
 \right\},\]  
where
\[f_{a,b,h}(x)=
\tr_{m/1}\left[a \tr_{2m/m}\left(x^{1+2^{m-1}}\right) \right] + \tr_{2m/1}(bx) + h.\]
It then follows from Theorem \ref{thm-bentvectf} that $\tr_{2m/m}(x^{1+2^{m-1}})$ is a 
bent vectorial function from $\gf(2^{2m})$ to $\gf(2^m)$. Note that this bent vectorial 
function may not be new. But our purpose here is to show that bent vectorial functions 
could be constructed from special linear codes.   

Conversely, we could say that the extended narrow-sense BCH code of length $2^{2m}-1$ and  designed 
distance $2^{2m-1}-1-2^{m-1}$ is in fact generated from the bent vectorial function 
$\tr_{2m/m}(x^{1+2^{m-1}})$ from $\gf(2^{2m})$ to $\gf(2^m)$ using the construction 
of Note \ref{note-tracecons}. 

Example \ref{bch6410} gives a demonstration of that. 
Thus, all known binary codes with the weight enumerator 
 (\ref{eqn-wtenumerator111}) for some $1\le \ell \le m$ and arbitrary $m\ge 2$
 are obtained from the bent vectorial function construction. 
As shown in Example \ref{ex6}, all $[16,7,6]$ codes obtained from $(4,2)$
 bent vectorial functions are equivalent.
 Example  \ref{example9} shows that
there are at least three inequivalent $[64, 10, 28]$ binary codes from bent vectorial functions, 
one of these codes being an extended BCH code.
\end{note}

\begin{note} 
It is known that two designs $\bD(f)$ and $\bD(g)$ from two single bent Boolean functions $f$ and $g$ 
on $\gf(2^{2m})$ are isomorphic if and only if $f$ and $g$ are weakly affinely equivalent 
\cite{DS87}. Although the classification of bent Boolean functions into weakly affinely 
equivalent classes is open, 
the results from \cite{Kantor83} and \cite{DS87} imply that the
number of nonisomorphic SDP designs and inequivalent bent functions in $2m$ 
variables grows exponentially with linear growth of $m$.
\end{note} 

\begin{note}\label{note-equivconstruct} 
Two $(n, \ell)$ vectorial Boolean functions $(f_1(x), \cdots, f_\ell(x))$ and 
$(g_1(x), \cdots, g_\ell(x))$ from $\gf(2^n)$ to $\gf(2)^\ell$ are said 
to be \emph{EA-equivalent} if there are an automorphism of $(\gf(2^n), +)$, a 
homomorphism $L$ from $(\gf(2^n),+)$ to $(\gf(2)^\ell, +)$, 
an $\ell \times \ell$ invertible matrix $M$ over $\gf(2)$, 
an element $a \in \gf(2^n)$, and an element $b \in \gf(2)^\ell$ such that 
\[(g_1(x), \cdots, g_\ell(x))= 
(f_1(A(x)+a), \cdots, f_\ell(A(x)+a))M +L(x) +b \]
for all $x \in \gf(2^n)$.  

Let $(f_1(x), \cdots, f_\ell(x))$ and $(g_1(x), \cdots, g_\ell(x))$ be two 
bent vectorial functions from $\gf(2^{2m})$ to $\gf(2)^\ell$. 
We conjecture that the designs $\bD(f_{1}, \cdots, f_{\ell})$ 
and $\bD(g_{1}, \cdots, g_{\ell})$ are isomorphic if and only if 
$(f_1(x), \cdots, f_\ell(x))$ and $(g_1(x), \cdots, g_\ell(x))$ are EA-equivalent. 
The reader is invited to attack this open problem.   
\end{note} 
~\\
Suppose that  $\bD$ is a 2-design with parameters (\ref{par})
obtained from a bent vectorial function
$F(x)=(f_1(x), f_2(x), \cdots, f_\ell(x))$, ($1 \le \ell \le m$), via the construction from Theorem \ref{main}.
Let $\cal{B}$ be the block set of $\bD$. 
If $B$ is a block of $\bD$,  we consider the collection of new blocks $\cB^{de}$
consisting of  intersections $B \cap B'$ such that $B' \in \cal{B}$ and $| B \cap B' |=2^{2m-2} - 2^{m-1}$. 

\begin{theorem}
\label{t25}
For each $B \in \bD$, the incidence structure $(B, \cB^{de})$ 
is a quasi-symmetric design with parameters 
$$ 
2-( 2^{2m-1}-2^{m-1},\, 2^{2m-2}-2^{m-1},\, 2^{2m-2}-2^{m-1}-1) 
$$ 
and intersection numbers $2^{2m-3} - 2^{m-2}$ and   $2^{2m-3} - 2^{m-1}$.
\end{theorem}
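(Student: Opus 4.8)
The plan is to recognize $(B,\cB^{de})$ as the classical derived design, with respect to the block $B$, of one of the symmetric SDP designs sitting inside $\C$, and then to read off its parameters and intersection numbers from the symmetric difference property of that design. To this end I would first recall the internal structure of $\C=\C(f_1,\dots,f_\ell)$ used in the proof of Theorem~\ref{main}: for every nonzero $u=(u_1,\dots,u_\ell)\in\gf(2)^\ell$ the Boolean function $g_u=\sum_{i=1}^{\ell}u_if_i$ is bent, and the code $\C(g_u)$ — spanned by $\RM_2(1,2m)$ and the truth table $\sum_{i=1}^{\ell}u_iF_i$ of $g_u$ — is a subcode of $\C$. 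By the Dillon--Schatz Theorem~\ref{thm-DSthm} applied to $g_u$, the $2^{2m}$ minimum weight codewords of $\C(g_u)$ support a symmetric SDP design $\bD_u$ with parameters \eqref{eqn-SDPparameters}; write $\cB_u$ for its block set. Since a $(0,1)$-vector is the indicator of its support and the minimum weight codewords of $\C$ lie in the $2^\ell-1$ nonzero cosets $g_u+\RM_2(1,2m)$, the block set $\cB$ of $\bD$ is the disjoint union of the $\cB_u$, and each block of $\bD$ lies in exactly one component $\cB_u$. Now fix $B$ and let $\cB_u$ be the component containing it. Two distinct blocks of the symmetric design $\bD_u$ meet in $2^{2m-2}-2^{m-1}$ points; on the other hand, if $B'$ lies in a different component $\cB_{u'}$, then the sum of the codewords supporting $B$ and $B'$ lies in the nonzero coset $g_{u+u'}+\RM_2(1,2m)$, hence is the truth table of a bent function, so $|B\triangle B'|\in\{2^{2m-1}-2^{m-1},\,2^{2m-1}+2^{m-1}\}$ and therefore $|B\cap B'|\in\{2^{2m-2}-2^{m-2},\,2^{2m-2}-3\cdot 2^{m-2}\}$ — these are the numbers $s_1,s_3$ of Theorem~\ref{thm-int}, both distinct from $s_2=2^{2m-2}-2^{m-1}$. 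Since $s_1,s_2,s_3$ are pairwise distinct for $m\ge 2$ and $|B\cap B|=2^{2m-1}-2^{m-1}\ne s_2$, the blocks $B'\in\cB$ with $|B\cap B'|=2^{2m-2}-2^{m-1}$ are exactly the $2^{2m}-1$ blocks of $\cB_u\setminus\{B\}$ (equivalently, Theorem~\ref{thm-int}(b) gives $n_2=2^{2m}-1$). Consequently $(B,\cB^{de})$ is precisely the derived design of the symmetric SDP design $\bD_u$ with respect to $B$.

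The parameter statement is then immediate: by the elementary count for derived designs recalled in Section~\ref{sec-Introd}, $(B,\cB^{de})$ is a $2$-$(2^{2m-1}-2^{m-1},\,2^{2m-2}-2^{m-1},\,2^{2m-2}-2^{m-1}-1)$ design, and it is quasi-symmetric because it is the derived design of a symmetric SDP design. To pin down the two intersection numbers I would take three distinct blocks $B,B',B''$ of $\bD_u$; in this symmetric design all three pairwise intersections have size $\lambda=2^{2m-2}-2^{m-1}$, so inclusion--exclusion gives $|B\triangle B'\triangle B''|=3k-6\lambda+4t$, where $k=2^{2m-1}-2^{m-1}$ and $t=|B\cap B'\cap B''|$. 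By the symmetric difference property of $\bD_u$ this number is $k$ or $2^{2m}-k$; the first case forces $t=2^{2m-3}-2^{m-1}$, the second $t=2^{2m-3}-2^{m-2}$. The remaining possibility $t=\lambda$, which would mean $B\cap B'=B\cap B''$, gives $|B\triangle B'\triangle B''|=3k-2\lambda=2^{2m}-2^{m-1}$, which is neither $k$ nor $2^{2m}-k$ when $m\ge 2$; this also shows $B'\mapsto B\cap B'$ is injective on $\cB_u\setminus\{B\}$, so $\cB^{de}$ has no repeated block, and the same three-equation counting argument as in the proof of Theorem~\ref{thm-int} shows that both values of $t$ occur. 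Since blocks $B\cap B'$ and $B\cap B''$ of $(B,\cB^{de})$ meet in $B\cap B'\cap B''$ points, its intersection numbers are $2^{2m-3}-2^{m-2}$ and $2^{2m-3}-2^{m-1}$, as claimed.

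The main obstacle is the identification carried out above: that prescribing the intersection size $2^{2m-2}-2^{m-1}$ isolates exactly the blocks lying in the same symmetric SDP component as $B$. Everything hinges on the decomposition of $\cB$ into a disjoint union of the symmetric SDP designs $\bD_u$ (Dillon--Schatz applied to each linear combination $g_u$) together with the fact that blocks from two distinct components never meet in $s_2$ points (because $g_{u+u'}$ is again bent). Once that is granted, the rest is the classical theory of derived designs of symmetric SDP designs plus the short arithmetic above; in particular one may, as an alternative to the direct count, invoke Theorem~\ref{thm-int}(b) from the outset.
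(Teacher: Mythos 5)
Your proof is correct, and its skeleton coincides with the paper's: recognize $(B,\cB^{de})$ as the derived design, with respect to $B$, of a symmetric SDP design contained in $\bD$, and then read off the parameters and intersection numbers. The difference lies in how the two supporting claims are established, and in both places you supply arguments the paper omits. First, the paper simply asserts (after quoting $n_2=2^{2m}-1$ from Theorem \ref{thm-int}) that $B$ together with the $2^{2m}-1$ blocks meeting it in $2^{2m-2}-2^{m-1}$ points forms a symmetric SDP design; you actually justify this by decomposing $\cB$ into the $2^{\ell}-1$ Dillon--Schatz designs $\bD_u$ attached to the cosets $g_u+\RM_2(1,2m)$ and checking that cross-component intersections can only be $s_1$ or $s_3$ (because $g_{u+u'}$ is again bent), so that the blocks meeting $B$ in $s_2$ points are exactly the remaining blocks of the component containing $B$. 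Second, where the paper cites \cite{JT92} for the fact that a derived design of a symmetric SDP design is quasi-symmetric with intersection numbers $2^{2m-3}-2^{m-2}$ and $2^{2m-3}-2^{m-1}$, you reprove it from scratch via $|B\triangle B'\triangle B''|=3k-6\lambda+4t$ and the SDP property, which also yields simplicity of $\cB^{de}$ as a byproduct. Your computations check out; the net effect is a self-contained proof where the paper's version is shorter but leans on an external reference and an unproved identification.
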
 

\begin{proof}
By Theorem \ref{thm-int}, there are exactly
$2^{2m} -1$
blocks that intersect $B$ in $2^{2m-2}-2^{m-1}$
points. Together with $B$, these blocks form
a symmetric SDP design $D$ with parameters 2-$(2^{2m}, 2^{2m-1} - 2^{m-1}, 2^{2m-2} - 2^{m-1})$.
The incidence structure  $(B, \cB)^{de}$
is a derived design of $D$.
It was proved in \cite{JT92} that
each derived design
 of a symmetric SDP 2-$(2^{2m}, 2^{2m-1} - 2^{m-1}, 2^{2m-2} - 2^{m-1})$ design
 is quasi-symmetric design with intersection numbers
$2^{2m-3} - 2^{m-2}$ and $2^{2m-3} - 2^{m-1}$,
and having the additional property that the symmetric difference of
every two blocks
is either a block or the complement of a block. 
\end{proof}

\begin{note}\label{note-tracecons} 
Let $m >1$ be an integer. Let $F$ be a bent vectorial function from 
$\gf(2^{2m})$ to $\gf(2^m)$. Let $A$ be a subgroup of order $2^s$ of 
$(\gf(2^m), +)$. Define a binary code by 
\begin{eqnarray*}
\C_{A}:=\{(\tr_{m/1}(aF(x))+\tr_{2m/1}(bx)+c)_{x \in \gf(2^{2m})}: 
a \in A, b \in \gf(2^{2m}), c \in \gf(2)\}. 
\end{eqnarray*} 
It can be shown that $\C_{A}$  
can be viewed as a code $\C(f_{i_1}, \cdots, f_{i_s})$ 
obtained from a bent vectorial function $(f_{i_1}, \cdots, f_{i_s})$.  
\end{note}

\section{Summary and concluding remarks} 

The contributions of this paper are the following. 
\begin{itemize}
\item A coding-theoretic characterization of bent vectorial functions  
      (Theorem \ref{thm-bentvectf}). 
\item A construction of a two-parameter family of four-weight binary linear
 codes with parameters $[2^{2m}, 2m+1+\ell, 
      2^{2m-1}-2^{m-1}]$ for all $1 \leq \ell \leq m$ and all $m\ge 2$,
      obtained from $(2m, \ell)$ bent vectorial 
     functions (Theorem \ref{main}).
      The parameters of these codes appear to be new when $2 \leq \ell \leq m-1$. 
       This family of codes includes  some optimal codes, as well as codes meeting the
        BCH bound.
        These codes do not satisfy the conditions of the Assmus-Mattson
theorem, 
but nevertheless hold $2$-designs. 
It is plausible that most of these codes do not
admit 2-transitive automorphism groups (Conjecture \ref{conjecture}
and Theorem \ref{thnew}).

\item A  new construction of  a two-parameter family of $2$-designs   
with parameters 
\begin{eqnarray}
\label{pm}
2\mbox{--}(2^{2m}, \ 2^{2m-1}-2^{m-1}, \ (2^\ell-1)(2^{2m-2}-2^{m-1})), 
\end{eqnarray}  
and having three block intersection numbers, where $2\le \ell \le m$,
based on bent vectorial functions
(Theorem \ref{main} and Theorem \ref{thm-int}).         
This construction is a generalization of the construction of SDP designs
from single bent functions given in \cite{DS87}. 

\item The  number of nonisomorphic designs with parameters
(\ref{pm}) in the special case when $\ell =1$,
 grows exponentially with $m$
by a known theorem of Kantor \cite{Kantor83}.
It is an interesting open problem to prove
that the number of nonisomorphic designs with parameters
(\ref{pm}) grows exponentially for any fixed $\ell >1$. 
\end{itemize}

Finally, we would like to mention that vectorial Boolean functions were employed
in a different way to construct  binary linear codes  in \cite{TCZ17}.
The codes from  \cite{TCZ17} have different parameters from the codes described in this paper.

\section*{Acknowledgements} 
Vladimir Tonchev  acknowledges partial support  by a Fulbright 
grant, and would like to thank the Hong Kong University of Science 
and Technology for the kind hospitality and support during his visit,
when a large portion of this paper was written. The research of Cunsheng Ding 
was supported by the Hong Kong Research Grants Council, under Grant No. 
16300418.  
The authors wish to thank  the anonymous reviewers for their
valuable comments and suggestions for improving the manuscript.

\end{document}